\documentclass[]{amsart}
\usepackage[utf8]{inputenc}
\usepackage{graphicx}
\usepackage{amsmath,amssymb,mathtools,amsthm,amscd,amsfonts}
\usepackage[numbers,sort&compress]{natbib}
\usepackage{tikz}
\usepackage[svgnames]{xcolor} 
\usepackage{hyperref}
\usepackage{url}
\usepackage{dsfont}
\usepackage{stmaryrd}
\usepackage[]{algorithm2e}
\usepackage{algpseudocode}

\usepackage{tikz}
\usetikzlibrary{arrows.meta,positioning}

\DeclareMathAlphabet\mathbfcal{OMS}{cmsy}{b}{n}

\hypersetup{
	colorlinks=true,       
	linkcolor=cyan,          
	citecolor=green,        
	filecolor=magenta,      
	urlcolor=cyan           
}

\numberwithin{equation}{section}
\numberwithin{figure}{section}

\allowdisplaybreaks
\theoremstyle{plain}
\newtheorem{lemma}{Lemma}[section]
\newtheorem{proposition}[lemma]{Proposition}

\newtheorem{theorem}[lemma]{Theorem}

\theoremstyle{definition}
\newtheorem{definition}[lemma]{Definition}

\newcommand{\Z}{\mathbb{Z}}
\newcommand{\N}{\mathbb{N}}

\newcommand{\ind}[1]{{\mathds{1}_{\{#1\}} }}
\newcommand{\dd}{\textnormal{d}}

\renewcommand{\P}{\mathbb{P}}
\newcommand{\R}{\mathbb{R}}
\newcommand{\E}{\mathbb{E}}

\newcommand{\bX}{{\bf{X}}}
\newcommand{\bV}{{\bf{V}}}
\newcommand{\bv}{{\bf{v}}}

\newcommand{\bw}{{\bf{w}}}
\newcommand{\bu}{{\bf{u}}}
\newcommand{\bx}{{\bf{x}}}

\newcommand{\bp}{{\bf{p}}}
\newcommand{\by}{{\bf{y}}}
\newcommand{\bb}{{\bf{b}}}

\newcommand{\bd}{{\bf{d}}}

\newcommand{\bz}{{\bf{z}}}
\newcommand{\bM}{{\bf{M}}}
\newcommand{\be}[1]{ {\bf{e}}_{{#1}} }

\newcommand{\cB}{{\mathcal{B}}}
\newcommand{\cP}{{\mathcal{P}}}
\newcommand{\cA}{{\mathcal{A}}}

\newcommand{\cS}{{\mathcal{S}}}
\newcommand{\balpha}{{\boldsymbol{\alpha}}}
\newcommand{\bbeta}{{\boldsymbol{\beta}}}

\newcommand{\bfeta}{{\boldsymbol{\eta}}}
\newcommand{\bxi}{{\boldsymbol{\xi}}}

\newcommand{\btheta}{{\boldsymbol{\theta}}}
\newcommand{\HG}{\mathrm{HG}}
\newcommand{\obv}{{\bfeta}}

\newcommand{\bxe}[1]{ {\mathbf{x}}({#1}) }
\newcommand{\berno}[1]{\mathrm{Ber}({#1})}

\newcommand{\MSC}[1]{\small \textbf{MSC:} {#1}}

\newcommand{\keywordsbeginning}[1]{\small  \textbf{Keywords:} {#1}}

\title[Bernstein duality for multi-type Wright--Fisher processes]{
Multi-type $\Lambda$-Wright--Fisher processes with polynomial drift: Bernstein duality, long-term behaviour, and simulation of moments}
\author{Sebastian Hummel}
\address{BOKU University, 
Institute of Mathematics, Department of Natural Sciences and Sustainable Resources,
Gregor-Mendel-Stra\ss{}e 33/II, 
1180 Vienna, 
Austria}
\email{sebastian.hummel@boku.ac.at}

\date{\today}

\begin{document}

\maketitle

\begin{abstract}
    The $(d+1)$-type $\Lambda$-Wright--Fisher process with polynomial drift is a L\'evy-type process on the $d$-dimensional unit simplex
    whose generator entails a particular jump and diffusive component, 
    as well as a polynomial drift component.
    In this work, the process is connected to a (Bernstein) dual process, which admits an interpretation as an urn model with coloured balls.
    Exploiting this duality, we identify various parameter regimes: one in which the $\Lambda$-Wright--Fisher process is absorbed almost surely at a vertex of the simplex, and others in which it admits a unique nontrivial ergodic probability measure.
    The results include verifiable criteria for absorption versus persistence,
    as well as representations of absorption and stationary measures.
    Moreover, the duality relation is exploited to devise and implement a simulation-based approach for estimating moments of the $\Lambda$-Wright--Fisher process.
\end{abstract}
\MSC{Primary: 60J25, 60J90, 60J95; Secondary: 92D15}

\keywordsbeginning{Wright--Fisher process, branching--coalescing structures, duality, absorption probabilities, ergodic measures}

\tableofcontents

\section{Introduction}

$\Lambda$-Wright--Fisher processes have attracted substantial interest in both theoretical and applied probability~\cite{Evans1996,Favero2021,Overbeck1995}.
One key motivation is modelling type-frequency dynamics of alleles, traits, or genes in populations subject to evolutionary forces such as random mating, selection, and mutation.
Inferring from genetic data the extent to which these forces act on a population has driven advances in statistical inference methods and in the underlying probability theory.
But interest in these processes also stems from their intimate connection to rich mathematical objects such as coalescent-, branching-, and branching--coalescing-structures.
These objects underpin many inference tools in population genetics because they model genealogical relationships within the population~\cite{wake2009}.

The formal connection between the branching--coalescing structure and the $\Lambda$-Wright--Fisher process is typically a duality between two stochastic processes.
Generally, duality between two stochastic processes allows certain functionals of one process to be expressed in terms of the other.
Ideally one of the processes is simpler to analyse, enabling the study of the more complex process.
Beyond applications in population genetics, when the duality function is rich enough to determine the law of the process, the relation has been used to establish well-posedness of martingale problems~\cite{Ethier1986, Depperschmidt2024}.
Moreover, duality has found applications in mathematical finance~\cite{Cuchiero2012, Filipovic2016, Filipovic2020,AbiJaber2024}.

Certain $\Lambda$-Wright--Fisher processes admit a duality in which weighted moments of the primal process are expressed via a dual process~\cite{Etheridge2009,Etheridge2010}.
This duality is useful in both theory and applications.
A challenge in dealing with the dual process is that its transition rates involve the stationary distribution of the primal process.
In some cases that distribution is explicitly known and easy to compute, allowing the dual to be used for simulations of the Wright--Fisher process;
by a change of measure these simulations extend to a larger class~\cite{jenkins2017}.
However, there remain Wright--Fisher processes for which this simulation approach fails.
If the objective is solely to obtain moments of the forward process,
a more direct approach is to solve the associated partial integro-differential equation for the moment-generating or characteristic function and differentiate to recover moments.
In practice, closed-form solutions are uncommon, and numerical methods are complicated by boundary behaviour on the simplex: any simulation requires care to preserve the positivity and unit-sum constraint.

Another example of duality is that of Bernstein form, which has originally been introduced for the one-dimensional setting~\cite{Cordero2022}.
The duality function is a polynomial expressed in Bernstein basis polynomials, and the dual process takes values in the coefficients of that representation.
It is a special instance of a function-valued dual~\cite{dawson2014spatial}.
Although the dual's state space is more complex than in the weighted moment duality~\cite{Etheridge2009,Etheridge2010}, its transition rates do not require knowledge of the primal process's stationary distribution.
Bernstein duality has been used in the one-dimensional setting to represent absorption probabilities and ergodic measures~\cite{Cordero_2026} for $\Lambda$-Wright--Fisher processes with polynomial drifts.
Using Bernoulli factories to handle drift functions beyond the polynomial setting, 
the Bernstein duality has been extended to other models such as the Allen--Cahn equation \cite{Koskela2024} and the spatial Wright--Fisher process \cite{koskela_well_posedness_2026}, 
albeit always in the two-type setting only.

The class of finite-dimensional $\Lambda$-Wright--Fisher processes that we want to consider here,
have a certain polynomial drift term.
This is to mean that the component of the generator associated to the deterministic part of the process is a finite-degree polynomial (of form~\eqref{eq:drift_form}).
We remark that, in general, the corresponding processes do not necessarily agree with the class of \emph{polynomial processes} introduced in~\cite{Cuchiero2012,Filipovic2016}, 
and they extend the processes treated in~\cite{GonzalezCasanova2020}.

The purpose of this paper is fivefold. 
First, we extend Bernstein duality from the one-dimensional setting to the finite-dimensional case, thereby closing an open problem posed in~\cite{Cordero2022}.
Second, we give conditions characterising when the finite-dimensional process is absorbed at the boundary and when it admits a stationary distribution that places mass also in the interior.
Third, in the case when the process is absorbed, we derive formulas for absorption probabilities.
Fourth, in the case when a stationary distribution exists that places mass also in the interior of the state space, 
we derive representations of limiting distribution, in particular for models with drift terms corresponding to so-called parent-dependent mutation kernels, 
a special case of direct applied interest.
Fifth, we explore implementations of the duality to compute moments of non-neutral $\Lambda$-Wright--Fisher processes based on the Bernstein dual process, which differs from the exact simulation in~\cite{jenkins2017} or Runge--Kutta methods.

The paper is organised as follows. 
Main results are presented in Section~\ref{sec:main_results},
which we conclude with a list of open problems.
Section~\ref{sec:proofDuality} contains the proof of the duality. 
Section~\ref{sec:longterm} gathers the proofs concerning the long-term behaviour of the $\Lambda$-Wright--Fisher processes.
Finally, Appendix~\ref{sec:generator_identity} contains the (lengthy but straightforward) computation of the generator identity between the $\Lambda$-Wright--Fisher process and its dual.

\bigskip

\textit{Notation:}
Throughout the paper, $d\in \N$ is fixed.
We use boldface notation for vectors. 
The $d$-dimensional simplex is defined as 
\[
\Delta_d\coloneqq\Big\{\bx=(\bxe{1},\bxe{2},\ldots,\bxe{d})\in [0,1]^{d}: \textstyle\sum_{i=1}^d  \bx(i)\leq 1 \Big\}.
\] 
For $\bx\in \Delta_d$, we set $\bxe{d+1}\coloneqq 1-\sum_{i=1}^d  \bxe{i}$.
We write $\mathrm{int}(\Delta_{d})$ for the interior of~$\Delta_{d}$, 
and for $i\in \llbracket d+1\rrbracket$ set $\partial_{i}\Delta_{d}\coloneqq\{\bx\in \Delta_d:\bx(i)=0\}$.
For $n,m\in \Z$ with $n\leq m$,
the set of integers from $n$ to $m$ is denoted by 
$\llbracket n,m\rrbracket\coloneqq\{k\in \Z: n\leq k\leq m\}$; if $n=1$ we just write $\llbracket m\rrbracket\coloneqq\llbracket 1, m \rrbracket$. 
For $\balpha\in\N_0^{d+1}$, 
we use the notation $\bx^\balpha\coloneqq \prod_{i=1}^{d+1} \bx(i)^{\balpha(i)}$, 
and we stress that this includes the dependent $\bx$-component with index~$d+1$.
For $n\in \N_0$, we write $\Pi(n)\coloneqq \{\balpha\in \N_0^{d+1}: \lvert \balpha \rvert = n\}$, 
where $\lvert \balpha\rvert \coloneqq \sum_{i=1}^{d+1}\balpha(i)$; 
that is, $\Pi(n)$ is the set of weak compositions of~$n$ into~$d+1$ parts (weak composition means that a~$0$-part is allowed).
Write $\be{i}$ for the $i$th unit vector in $\N_0^{d+1}$, so it has value~$1$ at index~$i$ and~$0$ elsewhere; 
in particular, $n\cdot\be{i}$ is the vector with value~$n$ at index~$i$ and~$0$ elsewhere.
When $\be{i}$ appears as a point of $\Delta_d$, 
we mean the vertex of $\Delta_d$ with $\bx(i)=1$; 
in particular, $\be{d+1}$ then corresponds to $0\in\R^d$.
Let $\bM(n)$ denote the set of mappings from $\Pi(n)\to \R$; and set $\bM\coloneqq \cup_{n\in \N_0} \bM(n)$.
For $\bv\in \bM$, define $\dim(\bv)\coloneqq n$ if $\bv\in \bM(n)$.
Set $\lVert \bv\rVert_{\infty}=\max_{\balpha}\lvert \bv(\balpha)\rvert$.
For $i\in \llbracket d+1 \rrbracket$, define $\obv(i;n)\in \bM(n)$ as the map that assigns~$1$ to $n\cdot\be{i}$ and is~$0$ elsewhere; if $n=1$, we just write $\obv(i)$ to shorten notation.
For $\balpha\in \N_0^{d+1}$ and $n\in \N$ with $n\leq \lvert \balpha\rvert$, 
the $\N_0^{d+1}$-valued random variable $\HG^\balpha_n$ is said to have multivariate hypergeometric distribution with parameters $\balpha$ and $n$ if it assigns to $\bbeta\in \N_0^{d+1}$ with $\lvert \bbeta \rvert =n$ the probability
\begin{equation}\label{eq:hypergeometric}
    \mathfrak{P}(\HG^\balpha_n=\bbeta) = \tbinom{\lvert \balpha\rvert}{\lvert \bbeta\rvert}^{-1}\prod_{i=1}^{d+1}\tbinom{\balpha(i)}{\bbeta(i)},
\end{equation}
and zero else.
We write $\mathfrak{E}$ for the corresponding expectation.
For $\balpha\in \Pi(n)$, let $\binom{n}{\balpha}\coloneqq n!/(\prod_{i=1}^{d+1} (\balpha(i)!))$  denote the multinomial coefficient.

\section{Main results} \label{sec:main_results}
Fix $d\in \N$.
We are concerned with the $(d+1)$-type version of the $\Lambda$-Wright--Fisher process with a specific drift component.
This jump-diffusion process is defined via its generator that acts on a twice continuously differentiable function~$f$ on~$\Delta_d$ as
\begin{equation}
\cA f(\bx)=\cA_{\Lambda(0)}f(\bx)+\cA_{\Lambda_{>}}f(\bx)+\cA_{\bd}f(\bx), \label{eq:wfgenerator}
\end{equation} with the building blocks
    
\begin{align}
	\cA_{\Lambda(0)}f(\bx)\coloneqq&\ \frac{\Lambda(\{0\})}{2}\sum_{i,j=1}^d \bxe{i}\Big(\ind{j}(i)-\bxe{j}\Big)\frac{\partial^2 f}{\partial \bxe{i}\partial \bxe{j}} ,\label{eq:wfterm} \\
	\cA_{\Lambda_{>}}f(\bx)\coloneqq&\ \int_{(0,1]}  \sum_{i=1}^{d+1}\bx(i) \left[f\left((1-r)\bx+r\be{i}\right)-f\big(\bx\big)\right]\frac{\Lambda(\dd r)}{r^2},\label{eq:lambdaterm} \\
	\cA_{\bd}f(\bx)\coloneqq&\ \sum_{i=1}^{d} \bd(\bx;i)\frac{\partial f}{\partial \bxe{i}},\label{eq:driftterm} 
\end{align}

where $\Lambda$ is from the finite Borel measures on $[0,1]$, which we abbreviate by $\mathcal{M}_f([0,1])$ and which gives the process its name, and $\bd(\bx;i)$ is the $i$th drift component.
We will only consider drifts taking the form
\begin{equation}\label{eq:drift_form}
    \bd(\bx;i)=\sum_{m=1}^{\kappa} \bb(m) \sum_{\bbeta\in \Pi(m)} \left(\bp(\bbeta;i)-\frac{\bbeta(i)}{m}\right) \binom{m}{\bbeta} \bx^{\bbeta},
\end{equation}
where $\kappa\in \N$, $\bb \in \R_{\geq}^{\kappa}$, 
and for $m\in\llbracket \kappa\rrbracket$, $\bbeta\in \Pi(m)$ and $i\in \llbracket d+1\rrbracket$, 
we have $\bp(\bbeta;i)\in [0,1]$.
To be exact, we will work under the following two additional (and for now technical) conditions. 
First, $\textstyle\sum_{i=1}^{d+1} \bp(\bbeta;i)=1$. 
Second, for $\lvert \bbeta\rvert>1$ and for all $i\in \llbracket d+1 \rrbracket$,
\begin{equation}
    \text{if } \bbeta(i)=0 \ \Rightarrow \  \bp(\bbeta;i)=0\label{eq:cond_probability_0}.
\end{equation}
We write $P\coloneqq(\bp(\be{i};j))_{i,j\in \llbracket d+1 \rrbracket}$.
For $j\in \llbracket d+1\rrbracket$, define $\btheta(j) \coloneqq \min_{i\in \llbracket d+1\rrbracket} P_{i,j}$ so that $P_{i,j}=\btheta(j)+\tilde{P}_{i,j}$ for some $\tilde{P}_{i,j}$;
set then $\tilde{\bp}(\be{i};j)\coloneqq\tilde P_{i,j}$.
We stress that $\btheta(j)$ is allowed to be~$0$.
It follows from the conditions on $\bp$ that $\lvert \btheta\rvert \in [0,1]$.
While the condition and parametrisation of the drift may appear unusual at this stage,
we anticipate that they will become more intuitive once we introduce the dual process in Section~\ref{sec:Bernstein_dual_process}.

That each part of~$\cA$ separately generates a Feller semi-group is well-known~\cite{shiga1981,Bertoin2003}; that also $\cA$ does will be established in Theorem~\ref{thm:Bernsteinduality} below.
We write $\cP$ for a generic set of Wright--Fisher process parameters $\kappa\in \N$, $\bb \in \R_{\geq}^{\kappa}$, $\bp(\bbeta;i)\in [0,1]$ for $\bbeta\in \N_0^{d+1}$ and $i\in \llbracket d+1\rrbracket$, and $\Lambda \in \mathcal{M}_f([0,1])$.
In particular, this is meant to imply that the parameters of such $\cP$ satisfy the conditions around~\eqref{eq:cond_probability_0}.

\subsection{Bernstein coefficient process as a Bernstein dual process} \label{sec:Bernstein_dual_process}
Given a set of Wright--Fisher process parameters $\cP$, we construct a process of an urn that contains balls of potentially~$d+1$ different colours.
To this end, consider first the Markovian process $L=(L_r)_{r\geq 0}$ on~$\N_0$ 
whose transition rates may not yet seem intuitively linked to the preceding discussion, except that they are also expressed in terms of the parameters in~$\cP$.
For completeness, these rates are as follows.
$$n\mapsto\begin{cases}n-k+1 & \text{at rate } \binom{n}{k}\lambda_{n,k}, \ k\in \llbracket 3,n\rrbracket, \\
n-1 & \text{at rate }n\lvert \btheta\vert  \bb(1)+ \binom{n}{2}\lambda_{n,2}, \\
n+m-1 & \text{at rate }n\bb(m)(1-\ind{m=1} \lvert \btheta\rvert ),\ m\geq 1,
\end{cases}$$
where 
		\begin{equation}
			\lambda_{n,k}\coloneqq \int_{[0,1]} z^{k-2}(1-z)^{n-k}\Lambda(\dd z), \label{defrateneutjump}
		\end{equation}
		with the convention that $0^0=1$; in particular $\lambda_{n,2}=\Lambda(\{0\})+\int_{(0,1]} (1-z)^{n-2}\Lambda(\dd z)$.

\medskip 

Fix~$T>0$ and assume $L_0=n$.
Given $L=(L_r)_{r\in [0,T]}$, we construct the urn process in a time interval $[0,T]$ as follows.
The number of balls in the urn at time~$t$ is set to $\hat{L}_{t}\coloneqq L_{T-t}$.
In particular, the urn process starts in general with a random number $\hat{L}_0=L_T$ coloured balls (at time~$t=0$).
At this stage we did not specify how many balls of what colour are in the urn.
What will be prescribed, however, is how the composition of the urn changes at jumps of $\hat{L}$.
This includes silent transitions, which is to mean that $n\mapsto n$.

If the ball-count decreases by $\tilde{m}$ balls, $\tilde{m}>0$, then sample just before the decrease $\tilde{m}+1$ balls uniformly at random (without replacement); 
this is always possible because a decrease of~$\hat{L}$ is induced by an increase of $L$, which requires there to be at least still one ball in the urn so that there are indeed at least~$\tilde{m}+1$ balls to sample from.
A sample can be summarised by $\bbeta\in\Pi(\tilde{m}+1)$, with $\bbeta(i)$ coding the number of balls of colour~$i$ in the sample.
For a sample~$\bbeta$, the~$\tilde{m}+1$ balls making up that sample are replaced by a single ball of colour~$i$ with probability $\bp(\bbeta;i)$; leading to the decrease by $\tilde{m}$.

Whenever the number of balls increases by just one ball to a total of~$n$ balls, there are two possible mechanisms the composition can change.
Either sample the colour of the additional ball such that the new ball has colour $i$ with probability $\btheta(i)/\lvert \btheta\rvert$; or sample one ball uniformly at random before the increase, 
put back the sampled ball together with a ball of the same colour.
The first way of increase occurs with probability 
$n\lvert \btheta\rvert\bb(1)/(n\lvert \btheta\rvert\bb(1)+\binom{n}{2}\lambda_{n,2})$,
the other way with the complement probability.
If there is a silent jump of~$\hat{L}$, then we sample one ball uniformly at random;
if the colour of that ball is~$i$, 
we replace it by a ball of colour~$j$ with probability $\tilde P_{ij}/(1-\lvert\btheta\rvert)$.
Whenever the ball-count increases by two or more balls, we sample just before the increase one ball uniformly at random, and all the new balls are deemed to carry the colour of the sampled ball; the sampled ball is returned to the urn.
In particular, in light of such transitions, the condition~\eqref{eq:cond_probability_0} just means that in a decrease, 
the ball replacing the sample can be interpreted to be from within the sample.

\smallskip

The state of an urn with~$n$ balls can be coded by an element of $\Pi(n)$.
Let $\bV_r\in \bM(L_r)$ be a probability measure on the possible urn compositions at time $T-r$, i.e. a probability measure on~$\Pi(\hat{L}_{T-r})$.
Conditional on $\bV_0$ being fixed to some~$\bv\in\bM(L_0)$, 
the measure $\bV_T$ gives the probability distribution on urn compositions at time~$0$ that are compatible under the dynamic with an urn composition at time~$T$ being distributed as~$\bv$.

Extending the probability measures arising under such an urn dynamic to general finite signed measures is the idea behind the dual process.
In particular, the dual process takes values in the finite signed measures on urn compositions arising under such a dynamic.
For more details in the case~$d=1$, in particular for the relationship of that process to genealogical structures underlying the interpretation in population genetics, 
we refer to~\cite{Cordero2022} and~\cite{Cordero_2026}.

The following linear operators, which we refer to as Bernstein operators, will later be used to prescribe the change of measures induced by transitions of the urn process.
For now, we only state their definition.

\begin{definition}[Bernstein operators]\label{def:bramutoperators}
Fix the $\Lambda$-Wright--Fisher process parameters~$\cP$.
Recall the definition of the multivariate hypergeometric distribution $\HG_{\cdot}^{\cdot}$ from~\eqref{eq:hypergeometric}.
Given~$\cP$, define for all $n\in \N$ and $\bv\in \bM(n)$, the following operators.
\begin{enumerate}
    \item For $k\in\llbracket 2, n\rrbracket $, 
    set $C^{n,k}: \bM(n)\to\bM(n-k+1)$ as 
    \begin{equation}
        C^{n,k}(\bv)\coloneqq \left( \mathfrak{E}\Big[\bv\big(\balpha+(k-1)\, \HG^{\balpha}_1\big)\Big]:\, \balpha\in \Pi(n-k+1)\right).\label{eq:bernstein_diff_operator}
    \end{equation}
    \item For $m\in \llbracket 2, \kappa \rrbracket$, 
    set  $D^{n,m}: \bM(n)\to\bM(n+m-1)$ as  
    \begin{equation}
        D^{n,m}(\bv)\coloneqq \left( \mathfrak{E}\Big[\sum_{i=1}^{d+1}\bp(\HG^{\balpha}_m;i)\, \bv(\balpha-\HG^{\balpha}_m+\be{i})\Big]:\, \balpha\in \Pi(n+m-1)\right),\label{eq:bernstein_drift_operator}
    \end{equation}
    \item Set $D^{n,1}: \bM(n)\to\bM(n)$ as follows.
    If $\lvert\btheta\rvert =1$, 
    set $D^{n,1}(\bv)\coloneqq\bv$; otherwise
    \begin{equation}
        D^{n,1}(\bv)\coloneqq \left( \mathfrak{E}\bigg[\sum_{i=1}^{d+1}\frac{\tilde{\bp}(\HG^{\balpha}_1;i)}{1-\lvert \btheta\rvert}\, \bv(\balpha-\HG^{\balpha}_1+\be{i})\bigg]:\,\balpha\in \Pi(n)\right).\label{eq:bernstein_drift_operator_1}
    \end{equation}
    \item Set $D^{n,0}: \bM(n)\to\bM(n-1)$ as follows: 
    If $\lvert\btheta\rvert =0$, set $D^{n,0}(\bv)\coloneqq0$; otherwise
    \begin{equation}
        D^{n,0}(\bv)\coloneqq \left( \sum_{i=1}^{d+1} \frac{\btheta(i)}{\lvert \btheta\rvert}\bv(\balpha+\be{i}) : \balpha\in \Pi(n-1)\right).\label{eq:bernstein_drift_operator_0}
    \end{equation}
\end{enumerate}
\end{definition}
It is readily checked that these operators are indeed linear and admit a matrix representation.
We exploit this fact for the simulations discussed in Section~\ref{sec:example}.

For $\bv\in \bM(n)$, we let $\mathbb{O}_{\cP}$ be the set of operators from $\bM(n)$ to $\bM(k)$, for some $k \in \N_0$, 
that are the composition of a finite number of compatible Bernstein operators, including the identity operator of $\bM(n)$ corresponding to an empty composition. 
Compatibility refers not only to the necessity for dimensions to align appropriately but also to the following: 
If $\mathcal{P}$ is such that $\bb(1)=0$, 
then $D^{\cdot,0}$ as well as $D^{\cdot,1}$ will be deemed incompatible;
the transitions in $L$ that would trigger such change of measure would not occur.
Let $\cS_{\cP}(\bv) \subset \bM$ be defined by 
\begin{equation}
\cS_{\cP}(\bv)\coloneqq\left\{O (\bv)\in \bM: \ O\in \mathbb{O}_{\cP}\right\}. \label{defcvv}
\end{equation}
The set $\cS_{\cP}(\bv)$ is by definition invariant by composition of the associated compatible operators $C^{\cdot,\cdot}$ and $D^{\cdot,\cdot}$.
Since $\mathbb{O}_{\cP}$ is countable, also $\cS_{\cP}(\bv)$ is countable for every fixed~$\bv$ and we can equip it with the discrete topology.
In particular, we will consider convergence in distribution relative to that topology. 

The following definition of the Bernstein coefficient process makes rigorous the previous heuristic describing it as the evolution of tracking measures in an urn process with coloured balls.

\begin{definition}[Bernstein coefficient process]\label{def:BCP}
	Let $\bxi\in\bM$ and $\bv\in \cS_{\cP}(\bxi)$. 
	The Bernstein coefficient process starting at $\bv$ is the $\cS_{\cP}(\bxi)$-valued continuous-time Markov chain $\bV\coloneqq(\bV_r)_{r\geq 0}$ such that $\bV_0=\bv$ and that transitions from $\bw\in \cS_{\cP}(\bxi) \cap \bM(n)$ to
	\begin{itemize}
		\item $C^{n,k}(\bw)\in \bM(n-k+1)$, $k\in \llbracket 2, n\rrbracket$, 
        at rate $\tbinom{n}{k}\lambda_{n,k}$.
		\item $D^{n,m}(\bw)\in  \bM(n+m-1)$, $m\in \llbracket \kappa\rrbracket $, at rate $n \bb(m)(1-\ind{m=1}\lvert \btheta\rvert)$.
        \item $D^{n,0}(\bw)\in  \bM(n-1)$ at rate $n\lvert \btheta\rvert\,\bb(1)$.
	\end{itemize}
	Define $L_r\coloneqq\dim(\bV_r)$ so that $\bV_r = (\bV_r(\balpha):\balpha \in \Pi(L_r))$.
	Denote by $\P_{\bv}$ the probability measure associated with $\bV$ under the initial condition $\bV_0=\bv$, 
	and by $\E_{\bv}$ the associated expectation. 
\end{definition}

The next result is the Bernstein duality between the $\Lambda$-Wright--Fisher process and the Bernstein coefficient process.
The proof can be found in Section~\ref{sec:proofDuality}.

\begin{theorem}[Bernstein duality]\label{thm:Bernsteinduality}
    Fix parameters~$\cP$.
    For every $\bx\in\Delta_d$ the martingale problem for $(\cA,C^2(\Delta_d))$ has a solution $\bX$ with $\bX_0=\bx$,
    and every such solution satisfies, for all $\bv\in\bM$ and $t\geq 0$,
    \[
    \E\left[\sum_{\balpha\in \Pi(\dim(\bv))} \bv(\balpha)\binom{\dim(\bv)}{\balpha}\bX_t^{\balpha}\right]=\E_{\bv}\left[\sum_{\balpha\in \Pi(L_t)} \bV_t(\balpha)\binom{L_t}{\balpha}\bx^{\balpha}\right].
    \]
    Consequently, the martingale problem for $(\cA,C^2(\Delta_d))$ is well posed, 
    and its solution is a Feller process whose generator acts on $C^2(\Delta_d)$ as~$\cA$.
\end{theorem}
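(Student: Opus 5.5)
Write $H(\bx,\bv)\coloneqq\sum_{\balpha\in\Pi(\dim(\bv))}\bv(\balpha)\binom{\dim(\bv)}{\balpha}\bx^{\balpha}$ for the Bernstein polynomial with coefficient vector $\bv$. The plan is the standard duality route of \cite[Thm.~4.4.11]{Ethier1986} together with a separate existence argument.

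\emph{Step 1: generator identity.} We first show that for every $\bv\in\bM(n)$ and $\bx\in\Delta_d$,
\[
\cA H(\cdot,\bv)(\bx)=\sum_{k=2}^{n}\tbinom{n}{k}\lambda_{n,k}\big[H(\bx,C^{n,k}\bv)-H(\bx,\bv)\big]+\sum_{m=0}^{\kappa} r_m\big[H(\bx,D^{n,m}\bv)-H(\bx,\bv)\big].
\]
Here $r_m$ denotes the rate of the $D^{n,m}$ transition in Definition~\ref{def:BCP}. The right-hand side is exactly the generator of $\bV$ applied to $H(\bx,\cdot)$.

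The identity is checked block by block.
\begin{itemize}
\item For $\cA_{\Lambda(0)}$ and $\cA_{\Lambda_>}$ it suffices to do the computation on each basis polynomial $\binom{n}{\balpha}\bx^\balpha$. In the jump term, expand $((1-r)\bx+r\be{i})^{\balpha}$ binomially in the $i$th coordinate, integrate against $\Lambda(\dd r)/r^2$, and regroup. This produces the coalescence operators $C^{n,k}$, using the hypergeometric identity that $\binom{n-k+1}{\bgamma}\bx^{\bgamma}\cdot\bx(i)^{k-1}$ reorganises into degree-$n$ Bernstein terms. The diagonal ($k=1$) contributions cancel against the $-f(\bx)$ term.
\item For the drift we use the form \eqref{eq:drift_form}. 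The key identity is the Bernstein derivative formula
\[
\partial_{\bx(i)}H=n\sum_{\bgamma\in\Pi(n-1)}\binom{n-1}{\bgamma}\bx^{\bgamma}\big(\bv(\bgamma+\be{i})-\bv(\bgamma+\be{d+1})\big).
\]
We multiply it by $\binom{m}{\bbeta}\bx^\bbeta$ and resum over $\bgamma+\bbeta=\balpha\in\Pi(n+m-1)$. The factor $\binom{n-1}{\bgamma}\binom{m}{\bbeta}/\binom{n+m-1}{\balpha}$ is the hypergeometric weight, which yields $D^{n,m}$. The $-\bbeta(i)/m$ part of the drift together with $\sum_i\bp=1$ produces the $-H$ term.
\item For $m=1$ we split $P=\btheta\mathbf 1^\top+\tilde P$. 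This gives the degree-lowering operator $D^{n,0}$ (rate $n|\btheta|\bb(1)$) and the silent $D^{n,1}$.
\end{itemize}
This bookkeeping is lengthy, and I expect it to be the main obstacle (the paper defers it to Appendix~\ref{sec:generator_identity}). The delicate points are degree raising via $\bx(d+1)=1-\sum\bx(i)$ and condition~\eqref{eq:cond_probability_0}, which guarantees that the indices $\balpha-\HG^\balpha_m+\be{i}$ stay in $\N_0^{d+1}$.

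\emph{Step 2: existence.} Every polynomial is some $H(\cdot,\bv)$, since Bernstein polynomials of degree $n$ span all polynomials of degree at most $n$. Step 1 therefore shows that $\cA$ maps polynomials of degree at most $n$ into polynomials of degree at most $n+\kappa-1$. Moreover, $\cA$ satisfies the positive maximum principle on $C^2(\Delta_d)$: the diffusion and jump parts are standard, and the drift points inward on every face because $\bd(\bx;i)\ge0$ when $\bx(i)=0$, using \eqref{eq:cond_probability_0} and $\bp\ge0$. Since $\Delta_d$ is compact, \cite[Thm.~4.5.4]{Ethier1986} yields a solution of the martingale problem for every starting point $\bx$.

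\emph{Step 3: duality and conclusions.} The dual $L$ can explode upward only if $\kappa\ge2$. To rule out explosion, compare its upward rates, which are at most $n\sum_m\bb(m)$ and linear in $n$, with a Yule process, so that $\E[\sup_{s\le t}L_s^{p}]<\infty$ for all $p$. Next bound $|H(\bx,\bV_s)|\le\lVert\bV_s\rVert_\infty$. Each operator is an averaging, so $\lVert\cdot\rVert_\infty$ is nonincreasing; the only exception is $D^{n,1}$, which also averages since its weights sum to at most one. Hence $|H|\le\lVert\bv\rVert_\infty$.

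Together with the polynomial growth of the rates, this gives the integrability conditions of \cite[Cor.~4.4.13]{Ethier1986}. Applying that result to $f(\bx,\bv)=H(\bx,\bv)$ yields the duality identity for every solution $\bX$. Because polynomials are convergence determining on $\Delta_d$, the one-dimensional marginals of $\bX$ are uniquely determined. Uniqueness then follows by \cite[Thm.~4.4.2]{Ethier1986}.

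For the Feller property, the right-hand side of the duality is continuous in $\bx$, so $\bx\mapsto\E_\bx[p(\bX_t)]$ is continuous for every polynomial $p$. By density of polynomials in $C(\Delta_d)$ this extends to all continuous functions, so the semigroup maps $C(\Delta_d)$ to itself. Strong continuity follows from the martingale property, and the generator extends $\cA|_{C^2}$.
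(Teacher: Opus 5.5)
Your proposal is correct and follows the same route as the paper: the generator identity of Lemma~\ref{lem:generator_identity}, existence via the positive maximum principle and \cite[Thm.~4.5.4]{Ethier1986}, sup-norm contractivity of the Bernstein operators plus a pure-birth comparison for $L$ to meet the integrability hypotheses of \cite[Cor.~4.4.13]{Ethier1986}, and uniqueness via \cite[Thm.~4.4.2]{Ethier1986}. The deviations are minor: you obtain strong continuity of the semigroup $T_t$ from $\lVert T_tf-f\rVert_\infty\le t\lVert\cA f\rVert_\infty$ for $f\in C^2(\Delta_d)$ rather than from the dual's first jump time (both work), and condition~\eqref{eq:cond_probability_0} is neither what keeps the indices $\balpha-\HG^{\balpha}_m+\be{i}$ in $\N_0^{d+1}$ (this is automatic, since $\HG^{\balpha}_m\le\balpha$) nor needed for the drift to point inward (nonnegativity and unit sum of $\bp$ suffice).
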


The underlying \textit{duality function} is thus $H:\Delta_d\times \bM\to\R$ defined via 
\begin{align}
	H(\bx,\bv)\coloneqq \sum_{\balpha\in \Pi(\dim(\bv))} \bv(\balpha)\binom{\dim(\bv)}{\balpha}\bx^{\balpha} . \label{defh}
\end{align}

This completes the first aim of the present manuscript, to extend the Bernstein duality in~\cite{Cordero2022} to the multi-dimensional case.

Since $L_r=\dim(\bV_r)$ is the degree of the polynomial $\bx\mapsto H(\bx,\bV_r)$, 
we call $L$ the \emph{degree process}.

\subsection{Long-term behaviour}

For the long-term behaviour results, we impose the following condition on the parameters:

\begin{equation}\label{assumption:recurrence_L}
	\sum_{m=2}^{\kappa} (m-1)\bb(m) <\int_{[0,1]}\lvert \log(1-z)\rvert \frac{\Lambda(\dd z)}{z^2}+\lvert\btheta\rvert\, \bb(1).
\end{equation}
This assumption is for example satisfied if $\Lambda(\{0\})>0$; so all our results cover Wright--Fisher diffusions with the specified drift term.
In particular, from now on~$\cP$ denotes a generic choice for the model parameters that satisfy~\eqref{assumption:recurrence_L}.

The relevance of condition~\eqref{assumption:recurrence_L} is that when it holds, the Bernstein coefficient process turns out to be well behaved in the sense that the long-term behaviour of the corresponding degree process is known.
We recall the corresponding result rephrased in our notation; it is essentially Lemma~4.8 in~\cite{Cordero_2026}, and we refer to that reference for the proof 
(which in turn is strongly inspired by the method developed in~\cite{foucart2013impact}; however, see also~\cite{GonzalezCasanova2020} for a different approach).

\begin{lemma}[Recurrence and absorption of~$L$] \label{lem:L_rec_abs}
    Fix parameters~$\cP$. 
    $L$ is conservative.
	If~\eqref{assumption:recurrence_L} is satisfied, the following assertions hold.
	\begin{enumerate}
		\item If $\lvert \btheta\rvert\bb(1)=0$, then $1$ is positive recurrent for $L$. 
		\item If $\lvert \btheta\rvert \bb(1)>0$, then $L$ is absorbed at $0$ in finite time. 
	\end{enumerate}
\end{lemma}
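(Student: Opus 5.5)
The plan is to reduce everything to the one-dimensional degree process. Observe that $L$ is a birth--death-type chain on $\N_0$ whose rates do not depend on $d$: from $n$ it moves to $n-k+1$ at rate $\binom{n}{k}\lambda_{n,k}$ for $k\geq 2$, to $n-1$ additionally at rate $n\lvert\btheta\rvert\bb(1)$, and to $n+m-1$ at rate $n\bb(m)(1-\ind{m=1}\lvert\btheta\rvert)$. The case $m=1$ is a silent jump. This is exactly the degree process of the two-type model of~\cite{Cordero_2026}, with the following substitutions. The total branching rate per line of size $m-1$ is $\bb(m)$ for $m\geq 2$. The per-line death rate is $\lvert\btheta\rvert\bb(1)$, which plays the role of the mutation-killing rate there. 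The coalescence rates are the usual $\Lambda$-coalescent rates. So I will first check this identification of rates carefully, and then invoke Lemma~4.8 of~\cite{Cordero_2026}. For completeness I will also sketch why that result holds.

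\emph{Conservativeness.} Upward jumps from $n$ have total rate at most $n\sum_m\bb(m)$ and size at most $\kappa-1$. Hence $L$ is dominated by a linear-rate pure birth (Yule-type) process with bounded jumps, which does not explode. Concretely, I take $V(n)=n$ as a Lyapunov function: $\mathcal{G}V(n)\leq n\sum_{m}(m-1)\bb(m)=cV(n)$. Standard non-explosion criteria then apply.

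\emph{Recurrence and absorption.} Following the method of~\cite{foucart2013impact}, I will use the Lyapunov function $V(n)=\log n$ (or $\sum_{k\leq n}1/k$) and compute the drift for large $n$. The branching part contributes about $n\cdot\sum_m\bb(m)\log\frac{n+m-1}{n}\to\sum_{m\geq2}(m-1)\bb(m)$. The death part contributes about $-\lvert\btheta\rvert\bb(1)$. The key estimate is that the coalescence part $\sum_{k}\binom{n}{k}\lambda_{n,k}\log\frac{n-k+1}{n}$ converges to $-\int\lvert\log(1-z)\rvert z^{-2}\Lambda(\dd z)$, possibly $-\infty$. To see this, write the jump via binomial thinning: given $z$, each of the $n$ lines participates independently with probability $z$. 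Then $\log\frac{n-K+1}{n}\approx\log(1-z)$ by the law of large numbers, and one needs dominated or monotone convergence in $z$, handling $\Lambda(\{0\})$ (Kingman part, drift $-\infty$ relative scale) separately. Under~\eqref{assumption:recurrence_L} the drift of $V$ is therefore eventually bounded above by a negative constant. By Foster's criterion, returns to a finite set have finite expected time.

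\emph{The two cases.} In case (1), $\lvert\btheta\rvert\bb(1)=0$, so there is no death and $0$ is not reachable from $n\geq1$, since all downward jumps land at $n-k+1\geq1$. The chain on $\N$ is irreducible on the states communicating with $1$: from any $n$, a merger $C^{n,n}$ reaches $1$ if $\Lambda\neq0$, and condition~\eqref{assumption:recurrence_L} forces $\Lambda\neq 0$ here. Combined with Foster, this makes $1$ positive recurrent. In case (2), from every $n$ there is a positive-rate path to $0$ through successive deaths, with probability bounded below uniformly over the finite recurrent set. Positive recurrence of that finite set then gives absorption at $0$ in finite time almost surely, by a geometric-trials argument.

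The main obstacle is the asymptotic evaluation of the coalescence contribution to the log-drift, uniformly enough to conclude a strictly negative drift. In particular, one must treat the possibly infinite integral and the $\Lambda(\{0\})$ atom. Here I would rely directly on the computation in~\cite{Cordero_2026,foucart2013impact} rather than redo it.
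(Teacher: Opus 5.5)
Your proposal is correct and follows the paper's route. The paper gives no self-contained proof: it identifies $L$ with the degree process of \cite{Cordero_2026} and invokes its Lemma~4.8, which rests on the $\log n$ Foster--Lyapunov drift computation of \cite{foucart2013impact} that you sketch; for the needed upper bound on the drift, Fatou's lemma in $z$ suffices, so no domination is required when the integral in~\eqref{assumption:recurrence_L} is infinite. One small slip: if $\Lambda=\Lambda(\{0\})\delta_0$, then $\lambda_{n,n}=0$ for $n\geq 3$, so the total merger $C^{n,n}$ is unavailable and you should instead reach $1$ by successive binary mergers, which occur at rate $\binom{n}{2}\Lambda(\{0\})>0$.
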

The long-term behaviour of the degree process is useful for determining the long-term behaviour of~$\bV$ itself.
We distinguish between parameter sets having one of the following properties
\begin{align}
    &\bb(1)=0. \label{cond:absorption}\\
    &\lvert \btheta\rvert\bb(1)>0 \text{ and } P\text{ is irreducible} \label{cond:stationary_parent_independent}, \\
    &\bb(1)>0,\ \lvert \btheta \rvert =0, \kappa=1,\ P\text{ is irreducible and aperiodic}.\label{cond:stationary_true_parent_dependent}
\end{align}
From a biological perspective, condition \eqref{cond:absorption} corresponds to the absence of mutation, 
while \eqref{cond:stationary_parent_independent} describes a mutation process with a parent-independent component. 
Finally, \eqref{cond:stationary_true_parent_dependent} provides an instance of a strictly parent-dependent mutation mechanism.

The next two results deal with~\eqref{cond:absorption}.
The first result describes under~\eqref{assumption:recurrence_L} the positive recurrence of the Bernstein coefficient process; 
whereas the second result exhibits implications of that positive recurrence for the long-term behaviour of~$\bX$.
The corresponding proofs are presented in Section~\ref{sec:prooffixation}.

\begin{proposition}[Long-term behaviour of $\bV$ if $\bb(1)=0$]\label{prop:asympV}
	Fix parameters~$\cP$ such that $\bb(1)=0$.
    Then, $(\bV_{r}(L_r\be{i}))_{r\geq 0}$ is constant for all $i\in\llbracket d+1 \rrbracket $. 
	Moreover, if in addition~\eqref{assumption:recurrence_L} is satisfied, then the following holds for every $\hat{\bv}\in \bM(1)$.
	\begin{enumerate}  
		\item The Bernstein coefficient process $\bV$ on $\cS_{\cP}(\hat{\bv})$ has a unique invariant probability measure $\mu^{\hat{\bv}}$ with 
		support $\cS_{\cP}(\hat{\bv}) \subset \{\bw\in \bM : \  \forall i \in\llbracket d+1 \rrbracket   \ \bw(\dim(\bw)\be{i})=\hat{\bv}(\be{i})\}$. 
		\item Let $\bV_{\infty}^{\hat{\bv}}$ be a random variable with law~$\mu^{\hat{\bv}}$.
        If $\bV_0=\bw \in \bM$ with $\bw(\dim(\bw)\be{i})=\hat{\bv}(\be{i})$ for all $i\in\llbracket d+1 \rrbracket $, 
        then $\cS_{\cP}(\hat{\bv}) \subset \cS_{\cP}(\bw)$ and $\bV_r$ converges in distribution to $\bV_{\infty}^{\hat{\bv}}$ as $r\to\infty$.
	\end{enumerate}
\end{proposition}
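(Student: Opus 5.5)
The plan is to verify the invariance of the vertex coefficients directly from Definition~\ref{def:bramutoperators}. Then reduce the recurrence of $\bV$ to that of the degree process $L$ via Lemma~\ref{lem:L_rec_abs}. The point is that an element of $\bM(1)$ is completely determined by its vertex coefficients.

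\emph{Step 1 (constancy).} If $\bb(1)=0$, then $D^{\cdot,0}$ and $D^{\cdot,1}$ are incompatible, so $\bV$ only jumps via $C^{n,k}$ and $D^{n,m}$ with $m\ge2$. For $\balpha=(n-k+1)\be{i}$, the hypergeometric variable $\HG^{\balpha}_1$ equals $\be{i}$ a.s. Hence $C^{n,k}(\bw)((n-k+1)\be{i})=\bw(n\be{i})$. For $\balpha=(n+m-1)\be{i}$ we have $\HG^{\balpha}_m=m\be{i}$ a.s. Since $m\ge2$, condition~\eqref{eq:cond_probability_0} forces $\bp(m\be{i};j)=0$ for $j\neq i$, and hence $\bp(m\be{i};i)=1$. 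So $D^{n,m}(\bw)((n+m-1)\be{i})=\bw(n\be{i})$. Thus $r\mapsto\bV_r(L_r\be{i})$ is constant along every path. Applying the same computation to every operator in $\mathbb{O}_{\cP}$ gives the support inclusion $\cS_{\cP}(\hat\bv)\subset\{\bw:\bw(\dim(\bw)\be{i})=\hat\bv(\be{i})\ \forall i\}$.

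\emph{Step 2 (regeneration at $\hat\bv$).} Since $\Pi(1)=\{\be{1},\dots,\be{d+1}\}$, any $\bw'\in\bM(1)$ with $\bw'(\be{i})=\hat\bv(\be{i})$ for all $i$ equals $\hat\bv$. Start $\bV$ at some $\bw$ whose vertex coefficients agree with $\hat\bv$. By Step~1, at every time with $L_r=1$ we have $\bV_r=\hat\bv$. By Lemma~\ref{lem:L_rec_abs}(1) (here $\lvert\btheta\rvert\bb(1)=0$ and~\eqref{assumption:recurrence_L} holds), $L$ is conservative and the state $1$ is positive recurrent. We also need that $L$ reaches $1$ a.s. from any $n$. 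For this, note that~\eqref{assumption:recurrence_L} with $\bb(1)=0$ forces $\Lambda\neq0$. Then either $\lambda_{n,2}>0$, or $\Lambda$ is concentrated at $1$, in which case $\lambda_{n,n}>0$ and there is a direct jump to $1$. So $1$ is accessible from every level, and positive recurrence of an accessible state gives a.s. and integrable hitting. Consequently $\hat\bv$ is hit a.s. from $\bw$, so $\hat\bv\in\cS_{\cP}(\bw)$ and hence $\cS_{\cP}(\hat\bv)\subset\cS_{\cP}(\bw)$. Moreover, the return times of $\bV$ to $\hat\bv$ are exactly the return times of $L$ to $1$, which have finite mean.

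\emph{Step 3 (invariant law and convergence).} Take $\bV$ on $\cS_{\cP}(\hat\bv)$. Every state of this set is reachable from $\hat\bv$ by construction, and by Step~2 $\hat\bv$ is reachable from every state. So the chain is irreducible on $\cS_{\cP}(\hat\bv)$, non-explosive, and has the positive recurrent state $\hat\bv$. Standard continuous-time Markov chain theory then gives a unique invariant probability $\mu^{\hat\bv}$. Continuous time removes any periodicity issue, so $\bV_r\Rightarrow\mu^{\hat\bv}$ from any state of the class. For a general start $\bw$ as in (2), use the strong Markov property at the a.s.\ finite hitting time of $\hat\bv$ to transfer the convergence.

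The main obstacle is the irreducibility bookkeeping. Each operator in $\mathbb{O}_{\cP}$ must correspond to a transition of positive rate, which I interpret "compatible" to mean. We must also ensure that the accessibility of level~$1$ for $L$ from every $n$ is genuinely covered by Lemma~\ref{lem:L_rec_abs}. I would handle the latter by the explicit case check on $\Lambda$ given in Step~2.
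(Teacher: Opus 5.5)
Your overall route is the paper's: constancy of the vertex coefficients, the observation that an element of $\bM(1)$ is determined by them (so $\bV_r=\hat{\bv}$ whenever $L_r=1$), transfer of positive recurrence of level $1$ from $L$ to $\hat{\bv}$, irreducibility on $\cS_{\cP}(\hat{\bv})$, and the strong Markov property for a general start. Your explicit checks that $\bp(m\be{i};i)=1$ for $m\ge 2$, and that $\Lambda\neq 0$ gives a positive-rate coalescence path from every level down to $1$, are correct and spell out what the paper leaves implicit. The step that fails as written is ``positive recurrence of an accessible state gives a.s.\ and integrable hitting''. This is false in general, because a state can lead to a positive recurrent state and still escape to infinity with positive probability. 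For example, let $1$ be absorbing, and from each $n\ge 2$ jump at rate one to $1$ with probability $n^{-2}$ and to $n+1$ otherwise. Started from $2$, this chain never hits $1$ with probability $\prod_{n\ge 2}(1-n^{-2})=1/2$.

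What is true is that if the recurrent state $1$ leads to $n$, then $n$ lies in the closed class of $1$ and hits $1$ almost surely. This covers every state of $\cS_{\cP}(\hat{\bv})$, since all of them are reachable from $\hat{\bv}$, so your part (1) is unaffected. In part (2), however, $\bw$ is arbitrary and $\dim(\bw)$ need not be reachable from level $1$. Two admissible cases show this. If $\bb(m)=0$ for all $m\ge 2$, level $1$ is absorbing for $L$. If $\Lambda=c\,\delta_1$, $\kappa=3$ and $\bb(2)=0<\bb(3)$, only odd levels are reachable from $1$. There you need a separate argument. In the first case, $L$ is non-increasing with a positive downward rate at every level $n\ge 2$. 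In the second, the jump to $1$ has the fixed rate $c$ while the level grows at most linearly in the number of jumps, so the probability of avoiding that jump forever is $0$. Alternatively, do what the paper does and read Lemma~\ref{lem:L_rec_abs}(1) as including almost sure hitting of $1$ from every initial level; the paper's proof of (2) rests exactly on that reading. With either repair, your strong-Markov argument for (2) goes through.
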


The long-term behaviour of~$\bV$ can be used to deduce the long-term behaviour of~$\bX$ via the Bernstein duality.
This is the content of the next result.
Recall from the notation section, that for $i\in \llbracket d+1 \rrbracket$, the function $\obv(i)\in \bM(1)$ assigns~$1$ to $\be{i}$ and~$0$ else.

\begin{theorem}[Absorption of $X$ if $\bb(1)=0$]\label{fixext}
	Fix parameters~$\cP$ such that~\eqref{assumption:recurrence_L} holds and $\bb(1)=0$.
	Let $\bX_0=\bx$ for some $\bx\in \mathrm{int}(\Delta_d)$.
    Then $\bX_t$ converges almost surely to a random variable $\bX_\infty$ taking values only in $\{\be{i}\in \Delta_d: i\in \llbracket d+1\rrbracket \}$, and each such value is taken with positive probability.
	Moreover,
	\begin{align}
		\P_\bx(\bX_\infty=\be{i})=\E[H(\bx,\bV^{\obv(i)}_{\infty})]\in(0,1), \label{exprfivproba}
	\end{align}
	where the random variable $\bV^{\obv(i)}_{\infty}$ is as in Proposition \ref{prop:asympV}. 
\end{theorem}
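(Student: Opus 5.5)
The plan is to combine the martingale property of the coordinates with the duality of Theorem~\ref{thm:Bernsteinduality} and the convergence of Proposition~\ref{prop:asympV}.

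\emph{Step 1: each coordinate is a martingale.} Since $\bb(1)=0$, take $\bv=\obv(i)\in\bM(1)$, so that $H(\bx,\obv(i))=\bx(i)$. I will check that $\cA$ applied to the coordinate function $\bx\mapsto \bx(i)$ vanishes. The diffusive part has no first-order terms, so it gives zero. The jump part gives $\sum_j \bx(j)\,r(\be{j}(i)-\bx(i))=0$. The drift part with $m\geq 2$ equals
\[
\sum_{\bbeta}\Big(\bp(\bbeta;i)-\tfrac{\bbeta(i)}{m}\Big)\tbinom{m}{\bbeta}\bx^\bbeta .
\]
This need not vanish, so Step 1 is not automatic. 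To handle it I will instead use the dual. By Proposition~\ref{prop:asympV}, $\bV_r(L_r\be{i})$ is constant, which means the coefficients at the vertices are preserved.

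\emph{Step 2: convergence of $\bX_t$.} Evaluating $H$ at a vertex gives $H(\be{j},\bw)=\bw(\dim(\bw)\be{j})$. Hence the functions $h_i(\bx)\coloneqq \E[H(\bx,\bV^{\obv(i)}_\infty)]$ satisfy $h_i(\be{j})=\ind{i=j}$. I will show that $h_i(\bX_t)$ is a bounded martingale as follows.
\begin{itemize}
\item By duality and the Markov property, $\E_\bx[h_i(\bX_t)]=\lim_{r}\E_{\obv(i)}[H(\bx,\bV_{r+t})]=h_i(\bx)$. The limit exists by convergence in distribution together with boundedness: $\lVert\bV_r\rVert_\infty\leq 1$ because every Bernstein operator is an averaging operator, so $|H|\leq 1$.
\item Since $\sum_i h_i\equiv 1$, martingale convergence gives a.s.\ limits of $h_i(\bX_t)$.
\end{itemize}
To identify $\bX_\infty$ as a vertex, I will show that $\E_\bx[H(\bX_t,\bw)]\to \sum_j \bw(\dim\bw\,\be{j})\,\E[\ldots]$. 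The natural route is to take $\bw=\obv(\balpha;n)$-type test functions, i.e.\ monomials $\bx^{\balpha}$ with $\balpha$ not a multiple of a unit vector. Their dual starts with zero vertex coefficients, so by Proposition~\ref{prop:asympV}(2) (applied with $\hat\bv=0$) the dual converges to the zero measure, whose law is $\delta_0$. Hence $\E_\bx[\bX_t^{\balpha}]\to 0$, in particular $\E[\bX_t(i)\bX_t(j)]\to0$ for $i\neq j$. This forces every subsequential limit in law to be supported on the vertices. Combined with a.s.\ convergence of the bounded martingales $h_i(\bX_t)$ and continuity of $h_i$, the limit $h_i(\bX_\infty)=\ind{\bX_\infty=\be{i}}$ is determined. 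I would then upgrade to a.s.\ convergence of $\bX_t$ itself by noting that $\bX$ is near some vertex for large $t$ and that the $h_i$ separate the vertices.

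\emph{Step 3: the formula and positivity.} Optional stopping and bounded convergence give
\[
\P_\bx(\bX_\infty=\be{i})=\lim_t\E_\bx[h_i(\bX_t)]=h_i(\bx)=\E[H(\bx,\bV^{\obv(i)}_\infty)].
\]
For strict positivity, I will use that $\bV_0=\obv(i)$ remains in the dual state space with positive probability and that all compositions of Bernstein operators preserve nonnegativity. Then $H(\bx,\bV_\infty)\geq$ its vertex term $\bx(i)^{L}>0$ on the event, positive in probability, that $L_\infty=1$ (by positive recurrence of $1$, Lemma~\ref{lem:L_rec_abs}). Strictness of $<1$ follows from summing over $i$. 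The main obstacle I expect is Step 2, namely showing that the invariant law for $\hat\bv=0$ is $\delta_{\mathbf 0}$ and turning convergence in law of $\bX_t$ to vertices into almost-sure convergence.
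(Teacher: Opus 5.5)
The gap is the upgrade from almost sure convergence of $h_i(\bX_t)$ to almost sure convergence of $\bX_t$ itself.

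\textbf{What works.} Up to the almost sure limits $F_i\coloneqq\lim_{t\to\infty}h_i(\bX_t)$ your argument is sound. Your identification of the limit law is a valid alternative to the paper's. For $\balpha\in\Pi(n)$ not of the form $n\be{j}$, all vertex coefficients of $\bv^{\balpha}$ vanish, so $\bV\equiv 0$ after the first hitting time $\tau_1$ of $1$ by $L$. Your worry about $\hat\bv=0$ is therefore vacuous, since $\cS_{\cP}(0)$ consists only of zero vectors. This gives $\lvert\E_\bx[\bX_t^{\balpha}]\rvert\le\lVert\bv^{\balpha}\rVert_\infty\P_{\balpha}(\tau_1>t)\to0$. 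The paper instead shows $\E_\bx[\bX_t(i)^k]\to h^{(i)}(\bx)$ for all $k$, using the $k$-independence in Lemma~\ref{lemf}.

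Either way, concentration at the vertices, uniform continuity of $h_i$ and $h_i(\be{j})=\ind{i=j}$ give $F_i\in\{0,1\}$. You also use $\sum_j h_j\equiv1$ without proof; it follows from duality, since $\sum_j\E_{\obv(j)}[H(\bx,\bV_r)]=\sum_j\E_\bx[\bX_r(j)]=1$. Hence exactly one index $i^*$ has $F_{i^*}=1$.

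\textbf{The gap.} The step from here to $\bX_t\to\be{i^*}$ almost surely is not justified by what you wrote. ``$\bX$ is near some vertex for large $t$'' is only known in probability at fixed times. ``The $h_i$ separate the vertices'' only says $h_i(\be{j})=\ind{i=j}$. Almost sure convergence of $h_j(\bX_t)$ merely places the limit points of the path in the level set $\{\by\in\Delta_d:h_j(\by)=F_j\ \forall j\}$. If that set contained a non-vertex point, nothing you have shown rules out excursions of the path towards it along a sparse sequence of random times. Such excursions are compatible with concentration at the vertices at every fixed time.

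\textbf{What is needed, and how to get it.} You need this level set to be $\{\be{i^*}\}$ in a quantitative form, for instance $\inf\{h_j(\bz):\bz\in\Delta_d,\ \bz(j)\ge\varepsilon\}>0$ for every $\varepsilon>0$. This is the role of $\delta(\varepsilon)$ in the paper, which rests on the boundary properties of $h^{(i)}$ in Lemma~\ref{lemf}.

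You already have the ingredient, but you use it only afterwards and only at interior points. For nonnegative $\bw\in\cS_{\cP}(\obv(j))$ one has $H(\bx,\bw)\ge\bx(j)^{\dim(\bw)}$. Hence for every $\bx\in\Delta_d$ (not just interior ones), $h_j(\bx)\ge g_j(\bx(j))$, where $g_j(s)\coloneqq\E[s^{\dim(\bV^{\obv(j)}_\infty)}]$ is nondecreasing with $g_j(s)>0$ for $s>0$. For $j\neq i^*$ this gives $g_j(\bX_t(j))\le h_j(\bX_t)\to0$, hence $\bX_t(j)\to0$ almost surely, i.e.\ $\bX_t\to\be{i^*}$.

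Inserting this before Step~3 closes the proof. The remainder is fine: bounded convergence gives $\P_\bx(\bX_\infty=\be{i})=h_i(\bx)$, and $h_i(\bx)\in(0,1)$ for interior $\bx$ follows from positivity and summation.
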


Proposition~\ref{prop:asympV} and Theorem~\ref{fixext} address our objective of delineating the conditions for absorption and offering a representation of the absorption probability (so aim~$2$ and~$3$ outlined in the introduction).

Next, we consider the long-term behaviour under~\eqref{cond:stationary_parent_independent} or~\eqref{cond:stationary_true_parent_dependent}.
The next two results are proved in Section~\ref{sec:proofstationary}.

\begin{proposition}[Long-term behaviour of $\bV$ under~\eqref{cond:stationary_parent_independent} or~\eqref{cond:stationary_true_parent_dependent}] \label{prop:absV}
	Assume \eqref{assumption:recurrence_L} holds.
    If~\eqref{cond:stationary_parent_independent} or~\eqref{cond:stationary_true_parent_dependent} holds, for any $\bv\in \bM$, $\P_{\bv}$-almost surely $\lim_{r\to\infty}\min_{\balpha\in\Pi(L_r)}\bV_r(\balpha)=\lim_{r\to\infty}\max_{\balpha\in\Pi(L_r)}\bV_r(\balpha)= U_{\infty}$ for some random real-valued $U_{\infty}$. 
    Under~\eqref{cond:stationary_parent_independent},
    $\bV$ is absorbed in finite time in $\bV_\infty\in \cS_{\cP}(\bv) \cap \bM(0)$ and $\bV_{\infty}(0,\ldots,0)=U_{\infty}$.
\end{proposition}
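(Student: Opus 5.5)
The argument tracks the spread of the coefficients and the extreme coefficients along the chain. For $\bw\in\bM(n)$ set $m(\bw)\coloneqq\min_{\balpha\in\Pi(n)}\bw(\balpha)$ and $M(\bw)\coloneqq\max_{\balpha\in\Pi(n)}\bw(\balpha)$.

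\textbf{Step 1: monotonicity.} Every Bernstein operator from Definition~\ref{def:bramutoperators} maps each coordinate to a convex combination of coordinates of its argument. For $C^{n,k}$, $D^{n,m}$ and $D^{n,1}$ the weights are hypergeometric probabilities times $\bp(\cdot;i)$ (or $\tilde\bp/(1-\lvert\btheta\rvert)$), which sum to one. For $D^{n,0}$ the weights are $\btheta(i)/\lvert\btheta\rvert$. Hence $r\mapsto m(\bV_r)$ is nondecreasing and $r\mapsto M(\bV_r)$ is nonincreasing, both bounded by $\lVert\bv\rVert_\infty$. So almost surely the limits $m_\infty\le M_\infty$ exist, and it remains to show $m_\infty=M_\infty$, i.e.\ that the oscillation $M(\bV_r)-m(\bV_r)$ tends to $0$.

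\textbf{Case~\eqref{cond:stationary_parent_independent}.} By Lemma~\ref{lem:L_rec_abs}(2), $L$ is absorbed at $0$ in finite time. Once $L_r=0$ no transitions are possible, since all rates carry a factor $n$ or $\binom{n}{k}$. Thus $\bV_r$ is eventually constant and equal to some $\bV_\infty\in\cS_{\cP}(\bv)\cap\bM(0)$. Since $\Pi(0)$ is a single point, min and max coincide, and $U_\infty\coloneqq\bV_\infty(0,\ldots,0)$ gives everything claimed. Irreducibility of $P$ is not needed here.

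\textbf{Case~\eqref{cond:stationary_true_parent_dependent}.} Here $\kappa=1$ and $\lvert\btheta\rvert=0$, so the only transitions are the coalescence operators $C^{n,k}$ and $D^{n,1}$ (a single-ball mutation by $P$). By Lemma~\ref{lem:L_rec_abs}(1), $L$ is positive recurrent at $1$ and returns to $\{1\}$ infinitely often. At level $1$, $\bV_r$ is a vector $(\bw(\be{1}),\ldots,\bw(\be{d+1}))$, and $D^{1,1}$ acts on it as the stochastic matrix $P$. The plan is a contraction argument.
\begin{itemize}
\item Since $P$ is irreducible and aperiodic, there is $N$ with $P^N$ strictly positive. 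Its Dobrushin coefficient is then $\delta\coloneqq 1-\min_{i,i'}\sum_j\min(P^N_{ij},P^N_{i'j})<1$, so $N$ successive applications of $D^{1,1}$ at level $1$ contract the oscillation by the factor $\delta$.
\item Each excursion from level $1$ starts with $L$ sitting at $1$ for an $\mathrm{Exp}(\bb(1))$ time, during which only $D^{1,1}$ can fire; $C$-moves are impossible at $n=1$. Therefore, on each visit, with probability $p>0$ at least $N$ mutations occur before leaving, independently across visits by the strong Markov property.
\item By Borel--Cantelli, infinitely many visits contract the oscillation by $\delta$. All other moves are non-expansive by Step~1, so the oscillation tends to $0$, giving $m_\infty=M_\infty\eqqcolon U_\infty$.
\end{itemize}

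\textbf{Main obstacle.} The delicate point is Step~1 for the operators that change dimension. The coordinates of $C^{n,k}(\bw)$ or $D^{n,m}(\bw)$ are averages of entries of $\bw$ indexed by compositions of a different size, so one must check that all indices hit, for instance $\balpha-\HG^{\balpha}_m+\be{i}$, lie in $\Pi(n)$ and that the weights are nonnegative and sum to one. This is where the normalisations $\sum_i\bp(\bbeta;i)=1$ and $\sum_j\tilde P_{ij}=1-\lvert\btheta\rvert$ enter. Once this is verified, the oscillation is non-expansive under every transition and the rest of the argument goes through.
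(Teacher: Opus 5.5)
Your argument is correct. In case~\eqref{cond:stationary_true_parent_dependent} it takes a different route from the paper, and it misdescribes the dynamics in one place. Since $\kappa=1$ and $\lvert\btheta\rvert=0$, the only $D$-move is $D^{n,1}$, which preserves the dimension. So $L$ is nonincreasing and $1$ is absorbing. There are no excursions from level~$1$, and $\mathrm{Exp}(\bb(1))$ is the waiting time between consecutive mutations, not the sojourn time of $L$ at~$1$. Your Borel--Cantelli step over visits is therefore vacuous, but the conclusion is even easier: after the a.s.\ finite hitting time $\tau_1$ of~$1$, the moves $D^{1,1}$ occur forever at the jump times of a rate-$\bb(1)$ Poisson process. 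Hence the $\delta$-contraction happens infinitely often. Your Step~1, which you flag as the main obstacle, is routine; it is the same positivity-and-normalisation fact behind Lemma~\ref{lem:actionV}. Your treatment of case~\eqref{cond:stationary_parent_independent} coincides with the paper's.

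After observing that $1$ is absorbing, the paper writes the evolution on $\bM(1)$ as $\vec{\bV}_{\tau_1}Q^n$ with $Q=P^\top$. It then uses Perron--Frobenius, $Q^n\to\psi\cdot(1,\ldots,1)$, to obtain the explicit value $U_\infty=\vec{\bV}_{\tau_1}\cdot\psi$ with an exponential rate. It uses this later, e.g.\ $U_\infty=\psi(j)$ under $\P_{\obv(j)}$ in the proof of Theorem~\ref{thstadist}.

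Your monotonicity-plus-Dobrushin argument gives no formula for $U_\infty$, but it gives useful structure. First, $U_\infty\in[\min\bV_r,\max\bV_r]$ for all~$r$, so $\lvert H(\bx,\bV_r)-U_\infty\rvert\le\max\bV_r-\min\bV_r$ uniformly in~$\bx$. Second, $U_\infty>0$ under $\P_{\obv(j)}$ as soon as $N$ mutations have made the vector strictly positive.

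More importantly, your argument never uses that $1$ is absorbing. It only needs $L$ to visit~$1$ infinitely often, with independent positive-probability runs of $N$ mutations per visit. The excursion argument you actually wrote is exactly what is needed when $\bb(1)>0$, $\lvert\btheta\rvert=0$ and $\kappa\ge2$. There $L$ is positive recurrent but not absorbed at~$1$ by Lemma~\ref{lem:L_rec_abs}(1), and a visit contains at least $N$ mutations with probability $\big(\bb(1)/\sum_{m=1}^{\kappa}\bb(m)\big)^N>0$. So, for irreducible aperiodic~$P$, your route appears to extend Proposition~\ref{prop:absV} to the setting of the paper's third open problem, and via the bound above also Theorem~\ref{thstadist}. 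The paper's Perron--Frobenius argument does not reach that setting.
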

Under~\eqref{cond:stationary_true_parent_dependent}, $U_{\infty}$ can be explicitly computed using the stationary distribution of the primitive stochastic matrix~$P$ and the value of~$\bV$ when entering~$\bM(1)$ for the first time; 
we refer to the proof for details.
The following theorem shows that under~\eqref{cond:stationary_parent_independent} or~\eqref{cond:stationary_true_parent_dependent} 
the process $\bX$ has a unique stationary distribution, 
which can be expressed in terms of~$U_{\infty}$.
To state it, for $n\in\N_0$ and $\balpha\in\Pi(n)$ let $\bv^{\balpha}\in\bM(n)$ be given by
$\bv^{\balpha}(\balpha)=\binom{n}{\balpha}^{-1}$ and $\bv^{\balpha}(\bbeta)=0$ for
$\bbeta\in\Pi(n)\setminus\{\balpha\}$, and write $\P_{\balpha}\coloneqq\P_{\bv^{\balpha}}$
and $\E_{\balpha}\coloneqq\E_{\bv^{\balpha}}$.

\begin{theorem}[Ergodicity of $\bX$ under~\eqref{cond:stationary_parent_independent} or~\eqref{cond:stationary_true_parent_dependent}]\label{thstadist}
	Assume \eqref{assumption:recurrence_L} and either~\eqref{cond:stationary_parent_independent} or~\eqref{cond:stationary_true_parent_dependent}.
    Then, for any $\bx \in \Delta_d$ and $i\in \llbracket d+1 \rrbracket$, $\P_{\bx}(\bX_t \xrightarrow[]{t\to\infty} \be{i})=0$ and $\bX$ has a unique stationary distribution $\mathcal{L}$ that is characterised by its (mixed) moments
	\begin{align}
		\forall {\balpha} \in \cup_{n\geq 0} \Pi(n), \quad \int_{\Delta_d} \bx^{\balpha} \mathcal{L}(\dd \bx)=\E_{\balpha}[U_\infty], \label{momentsloiinvar}
	\end{align}
	where $U_{\infty}$ is as in Proposition~\ref{prop:absV}.
	Moreover, for any initial distribution on $\Delta_d$, the law of $\bX_t$ converges weakly to $\mathcal{L}$ as $t\to\infty$. 
\end{theorem}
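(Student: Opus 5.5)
The plan is to transfer the almost-sure flattening of the dual from Proposition~\ref{prop:absV} to the primal process through the Bernstein duality of Theorem~\ref{thm:Bernsteinduality}.

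\emph{Moments at time $t$.} Fix $\balpha\in\Pi(n)$ and take $\bv=\bv^{\balpha}$. By construction $H(\bx,\bv^{\balpha})=\bx^{\balpha}$, so Theorem~\ref{thm:Bernsteinduality} gives
\[
\E_\bx[\bX_t^{\balpha}]=\E_{\balpha}[H(\bx,\bV_t)].
\]
The Bernstein basis $\{\binom{L_t}{\bbeta}\bx^{\bbeta}:\bbeta\in\Pi(L_t)\}$ forms a partition of unity on $\Delta_d$. Hence
\[
\min_{\bbeta}\bV_t(\bbeta)\le H(\bx,\bV_t)\le\max_{\bbeta}\bV_t(\bbeta).
\]
By Proposition~\ref{prop:absV}, both bounds converge $\P_{\balpha}$-a.s.\ to $U_\infty$, so $H(\bx,\bV_t)\to U_\infty$ almost surely, uniformly in $\bx$.

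\emph{Boundedness for dominated convergence.} Each Bernstein operator of Definition~\ref{def:bramutoperators} is a Markov-type averaging: every output coordinate is a convex combination of input coordinates. For $D^{n,1}$ this uses $\sum_i\tilde\bp(\be{j};i)=1-\lvert\btheta\rvert$, and for $D^{n,0}$ it uses the weights $\btheta(i)/\lvert\btheta\rvert$. Consequently $\lVert\bV_t\rVert_\infty\le\lVert\bv^{\balpha}\rVert_\infty\le 1$ for all $t$. Dominated convergence then yields
\[
\lim_{t\to\infty}\E_\bx[\bX_t^{\balpha}]=\E_{\balpha}[U_\infty]
\]
for every $\bx$, and the limit does not depend on $\bx$. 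Averaging over $\bx$, the same limit holds for an arbitrary initial distribution.

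\emph{Identification of the limit law.} $\Delta_d$ is compact and polynomials are dense in $C(\Delta_d)$ by Stone--Weierstrass. The laws of $\bX_t$ are therefore tight, and any subsequential weak limit has the moments $\E_{\balpha}[U_\infty]$. Distinct probability measures on a compact set cannot share all mixed moments, so every subsequential limit equals a single measure $\mathcal{L}$, and the law of $\bX_t$ converges weakly to $\mathcal{L}$ characterised by~\eqref{momentsloiinvar}.

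\emph{Stationarity and uniqueness.} By Theorem~\ref{thm:Bernsteinduality} the process is Feller, so $P_tf\in C(\Delta_d)$ for continuous $f$. Then
\[
\mathcal{L}P_s f=\lim_t \E_\bx[P_sf(\bX_t)]=\lim_t\E_\bx[f(\bX_{t+s})]=\mathcal{L}f,
\]
so $\mathcal{L}$ is stationary. If $\mathcal{L}'$ is any stationary law, starting from it gives $\mathcal{L}'=\lim_t\mathcal{L}'P_t=\mathcal{L}$ by the convergence just shown, which proves uniqueness.

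\emph{Non-absorption at vertices.} Suppose $\P_\bx(\bX_t\to\be{i})=p>0$. Then $\E_\bx[\bX_t(i)^n]\ge p-o(1)$, so $\mathcal{L}$ would satisfy $\int \bx(i)^n\,\mathcal{L}(\dd\bx)\ge p$ for all $n$, and hence $\mathcal{L}(\{\be{i}\})\ge p>0$. The task is therefore to show $\mathcal{L}(\{\be{i}\})=0$. Equivalently, I will show $\E_{n\be{i}}[U_\infty]\to 0$ as $n\to\infty$, since $\mathcal{L}(\{\be{i}\})=\lim_n\int\bx(i)^n\,\mathcal{L}(\dd\bx)$ by monotone convergence.

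\emph{Main obstacle.} This last limit is the step I expect to be hardest. Under~\eqref{cond:stationary_parent_independent} I would start the dual at $\bv^{n\be{i}}$, which is the indicator of the all-colour-$i$ urn. $U_\infty$ is then the probability that the urn, run with mutation, has all balls of colour $i$ at the time the genealogy reaches $\bM(0)$ (or, equivalently, the value tracked along the lines). Irreducibility of $P$ together with $\lvert\btheta\rvert\bb(1)>0$ means that among $n$ lineages, each independently has a probability bounded below of being hit by a mutation to some colour $j\neq i$ before coalescence. Combining this with a bound such as the expected number of lineages surviving to a mutation event tending to infinity, or with a coupling with the one-dimensional argument of~\cite{Cordero_2026} obtained by lumping colour $i$ against all others, should give $\E_{n\be{i}}[U_\infty]\to0$. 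Under~\eqref{cond:stationary_true_parent_dependent} ($\kappa=1$, no coalescence-induced branching beyond $\Lambda$) I would instead use the explicit formula: $U_\infty$ equals the stationary vector of $P$ applied to $\bV$ at its first entrance into $\bM(1)$. The mass at $\be{i}$ then decays because aperiodicity and irreducibility make the colour-$i$ indicator, propagated through $D^{\cdot,1}$ steps, converge to the constant $\pi(i)<1$ on each lineage, and it is further diluted by the many coalescences needed to reduce $n$ lines to one.
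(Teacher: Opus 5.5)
Your first four steps are correct and match the paper's argument in substance. These are the moment limit via duality, the identification of $\mathcal{L}$ by moments, stationarity and uniqueness, and convergence from arbitrary initial laws. Your min--max sandwich replaces the paper's explicit bounds through the hitting times of $0$ (resp.\ $1$) by the degree process. It even gives the convergence uniformly in $\bx$, since $\lvert H(\bx,\bV_t)-U_\infty\rvert\le\max_{\bbeta}\lvert\bV_t(\bbeta)-U_\infty\rvert$.

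The gap is in the non-absorption claim. You reduce it to $\mathcal{L}(\{\be{i}\})=0$, i.e.\ $\E_{n\be{i}}[U_\infty]\to0$, and then offer only heuristics, so $\P_\bx(\bX_t\to\be{i})=0$ is not established.
\begin{itemize}
\item Lineages are not independent; they are coupled through the $\Lambda$-mergers.
\item In the parent-independent case a $D^{\cdot,0}$ step re-colours according to $\btheta/\lvert\btheta\rvert$. This may be concentrated on colour $i$, so a mutation event need not destroy the all-$i$ configuration.
\item Lumping colour $i$ against the rest does not in general give an autonomous one-dimensional process. The drift of $\bX(i)$ depends on the other coordinates through selection and parent-dependent mutation.
\item The sketch for the strictly parent-dependent case is equally informal.
\end{itemize}

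The missing observation is that excluding an atom at $\be{i}$ is much more than needed. It suffices that $\int\bx(i)\,\mathcal{L}(\dd\bx)=\E_{\obv(i)}[U_\infty]<1$. This follows from $\E_{\obv(j)}[U_\infty]>0$ for every $j$ together with $\sum_j\E_{\obv(j)}[U_\infty]=\lim_t\E_\bx[\sum_j\bX_t(j)]=1$, which is your moment limit with $\balpha=\be{j}$. Positivity is cheap in both cases.
\begin{itemize}
\item In the strictly parent-dependent case, $L_0=1$ already, so $U_\infty=\psi(j)>0$, where $\psi$ is the stationary distribution of $P$.
\item In the parent-independent case, irreducibility gives $k$ with $\btheta(k)>0$ and $n\in\N_0$ with $\tilde P^n_{k,j}>0$. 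With positive probability the first $n$ jumps of $\bV$ are $D^{1,1}$ and the next is $D^{1,0}$, which gives $U_\infty>0$. Since $U_\infty\ge0$ always, the expectation is positive.
\end{itemize}
The paper then uses the uniformity in $\bx$ to pick $t_0$ and $c<1$ with $\sup_\bx\P_\bx(\bX_{t_0}(i)>c)<1$. Iterating the Markov property at times $nt_0$ gives $\bX_{nt_0}(i)\le c$ infinitely often almost surely.

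Alternatively, you can keep your Fatou argument and add a $0$--$1$ law. The event $A=\{\bX_t\to\be{i}\}$ is shift invariant, so $\P_{\bX_t}(A)\to\mathds{1}_A$ almost surely. Hence $\P_\bx(A)>0$ would give $\sup_\by\P_\by(A)=1$. Applying Fatou from such $\by$ would then force $\E_{\obv(i)}[U_\infty]=1$, a contradiction.
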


Proposition~\ref{prop:absV} and Theorem~\ref{thstadist} address our goal of describing the conditions required for the existence of a non-trivial ergodic measure for~$\bX$, as well as providing a representation of that measure (so aim~$2$ and~$4$ outlined in the introduction).

\subsection{Simulation approach for moments of $\Lambda$-Wright--Fisher processes}\label{sec:example}

In this section, we lay out a procedure to use Bernstein duality and the dual process to simulate the moment process of the $\Lambda$-Wright--Fisher process. 
First, let us define the moment process: for $\balpha\in \Pi(n)$ and $\bx\in \Delta_d$, 
we define $u_{\balpha,\bx}(t)\coloneqq \E_{\bx}[\bX_t^{\balpha}]$ for $t\geq 0$. 
We call $u=(u_{\balpha, \bx}(t):t\geq 0)$ the $\balpha$-moment process of $\bX$ started from $\bx$.

For the simulations, we exploit the duality.
Fix $\balpha\in\Pi(n)$.
Theorem~\ref{thm:Bernsteinduality} then yields
\[
u_{\balpha,\bx}(t)=\E_{\bx}\big[H(\bX_t,\bv^{\balpha})\big]
=\E_{\bv^{\balpha}}\big[H(\bx,\bV_t)\big],\qquad t\ge 0,\ \bx\in\Delta_d .
\]

The strategy for simulating the moment process of $\bX$ is straightforward: 
we compute the empirical average of $(H(\bx, \bV_t) : t \geq 0)$ over multiple independent realisations of the dual process.
What makes the Bernstein coefficient process particularly tractable for simulation is that, even though the forward process is a jump-diffusion, the dual process is a pure jump process.

Fix the mixed moment $\balpha$ to be computed, and a time horizon $T > 0$. 
To obtain~$N$ realisations of the process, we perform the following steps for each $j \in \{1, \dots, N\}$:
First, we simulate the degree process $L^{(j)} = (L_t^{(j)} : t \in [0,T])$ starting from $n=\lvert \balpha\rvert$.
Let $0 = t_0 < t_1 < \dots < t_K$ denote the jump times of the process (including null transitions).
We initialise $\bV_0^{(j)}=\bv^{\balpha}$.
Second, conditioned on the realisation of $L^{(j)}$, let $M_i$ be the Bernstein operator corresponding to the $i$-th transition of $L^{(j)}$ for $i > 0$. 
For each jump time $t_i$, we update the state via $\bV_{t_i}^{(j)} = M_i \bV_{t_{i-1}}^{(j)}$. 
Since $\bV$ remains constant between transitions, this yields a full realisation $(\bV_t^{(j)} : t \in [0,T])$, from which we compute the sample path $u_{\balpha,\bx}^{(j)}(t) = H(\bx, \bV_t^{(j)})$ for all $t \in [0,T]$.
Finally, we approximate the moment process by the empirical mean:
\[
\hat{u}_{\balpha,\bx}(t) = \frac{1}{N} \sum_{j=1}^N u_{\balpha,\bx}^{(j)}(t).
\]

The most computationally costly component of this procedure is the multiplication of the Bernstein operators. 
If we are only interested in the expectations $(\E_{\balpha}[H(\bx,\bV_T)]:\bx\in\Delta_d)$, we can, 
for each realisation~$j$, 
concatenate the operators into a single composite Bernstein matrix $M^{(j)}\coloneqq M_K\cdots M_1$,
which maps $\bV_0^{(j)}$ to $\bV_T^{(j)}$. 
Applied to several initial vectors, e.g.\ to all $\bv^{\balpha}$ with $\balpha\in\Pi(n)$, it yields estimates of all moments of order~$n$ from the same realisation of~$L$.
Moreover, since $H$ is linear in its second argument, the final coefficient vectors $\bV_T^{(j)}$ can be averaged over all realisations, grouped by their degree $L_T^{(j)}$.
The estimate is then a polynomial in $\bx$ that can be evaluated at all initial conditions of interest at once. 
Absorption probabilities can be estimated in the same spirit without a time horizon using regenerative properties of~$\bV$.

Let us note in passing that 
when simulating the $\Lambda$-Wright--Fisher moment process, 
one can moreover exploit a coupling of the dual processes started from different initial conditions.
This allows for the simultaneous construction of trajectories for all moments of order $n' \leq n$.
Such a coupling is based on a corresponding coupling of the degree processes (see, e.g., \cite[Rem.~5.8]{Cordero2022}).

\subsubsection{Simulation of $\Lambda$-Wright--Fisher process with nontransitive fitness}

\begin{figure}[t]
    \centering
    \begin{minipage}[c]{0.5\textwidth}
        \begin{tikzpicture}[>=Stealth,
                node distance=2cm,
                every node/.style={circle,draw,minimum size=1.2cm,inner sep=0pt},
                every edge/.style={->,thick,shorten >=2pt,shorten <=2pt}
            ]
            
            \node[fill=Green!30, draw=Green!70, text=black]  (1) {1};
            \node[fill=blue!30, draw=blue!70, text=black]  (2) [right=of 1] {2};
            \node[fill=orange!30, draw=orange!70, text=black]  (3) [below=of 1] {3};
            \node[fill=red!30, draw=red!70, text=black]  (4) [right=of 3] {4};
            
            \draw[->,loop above] (1) to node[midway,above, fill=none, draw=none] {0.5} (1);
            \draw[->,bend left=20] (1) to node[midway, above left, fill=none, draw=none] {0.5} (2);
            
            \draw[->,bend left=20] (2) to node[midway, above right, fill=none, draw=none] {0.4} (1);
            \draw[->,bend left=20] (2) to node[midway, above right, fill=none,draw=none] {0.6} (4);
            
            \draw[->] (3) to node[midway, below left, fill=none,draw=none] {0.5} (1);
            \draw[->, bend left = 20] (3) to node[midway, below left, fill=none,draw=none] {0.5} (4);
            
            \draw[->,bend left=20] (4) to node[midway, above , fill=none,draw=none] {0.2} (1);
            \draw[->,bend left=20] (4) to node[midway, left, fill=none,draw=none] {0.5} (2);
            \draw[->,bend left=20] (4) to node[midway, below right, fill=none,draw=none] {0.3} (3);
            
            \end{tikzpicture}
    \end{minipage}~~~\begin{minipage}[t]{0.5\textwidth}
        \begin{tabular}{|*{5}{c|}}
            \hline
            $\bbeta$    & $1$  & $2$ & $3$ & $4$  \\ \hline \hline
            $(1,0,0,0)$ & $0.5$ & $0.5$ & $0$ & $0$ \\ \hline
            $(0,1,0,0)$ & $0.4$ & $0$ & $0$ & $0.6$ \\ \hline
            $(0,0,1,0)$ & $0.5$ & $0$ & $0$ & $0.5$ \\ \hline
            $(0,0,0,1)$ & $0.2$ & $0.5$ & $0.3$ & $0$ \\ \hline \hline
            $(2,0,0,0)$ & $1$ & $0$ & $0$ & $0$ \\ \hline
            $(0,2,0,0)$ & $0$ & $1$ & $0$ & $0$ \\ \hline
            $(0,0,2,0)$ & $0$ & $0$ & $1$ & $0$ \\ \hline
            $(0,0,0,2)$ & $0$ & $0$ & $0$ & $1$ \\ \hline
            $(1,1,0,0)$ & $0$ & $1$ & $0$ & $0$ \\ \hline
            $(1,0,1,0)$ & $0$ & $0$ & $1$ & $0$ \\ \hline
            $(1,0,0,1)$ & $1$ & $0$ & $0$ & $0$ \\ \hline
            $(0,1,1,0)$ & $0$ & $0$ & $1$ & $0$ \\ \hline
            $(0,1,0,1)$ & $0$ & $0$ & $0$ & $1$ \\ \hline
            $(0,0,1,1)$ & $0$ & $0$ & $0$ & $1$ \\ \hline
        \end{tabular}
    \end{minipage}
    \caption{Parameters of the $\Lambda$-Wright--Fisher process with parent-dependent mutation kernel and nontransitive fitness as in the example in Section~\ref{sec:example}.
    Left: graph of the underlying parent-dependent mutation matrix.
    Right: $\bp(\bbeta;i)$ for each viable $\bbeta$ and~$i\in \llbracket 4\rrbracket$. }
    \label{fig:example_model}
\end{figure}

\begin{figure}[t]
    \centering
    \includegraphics[width=\linewidth]{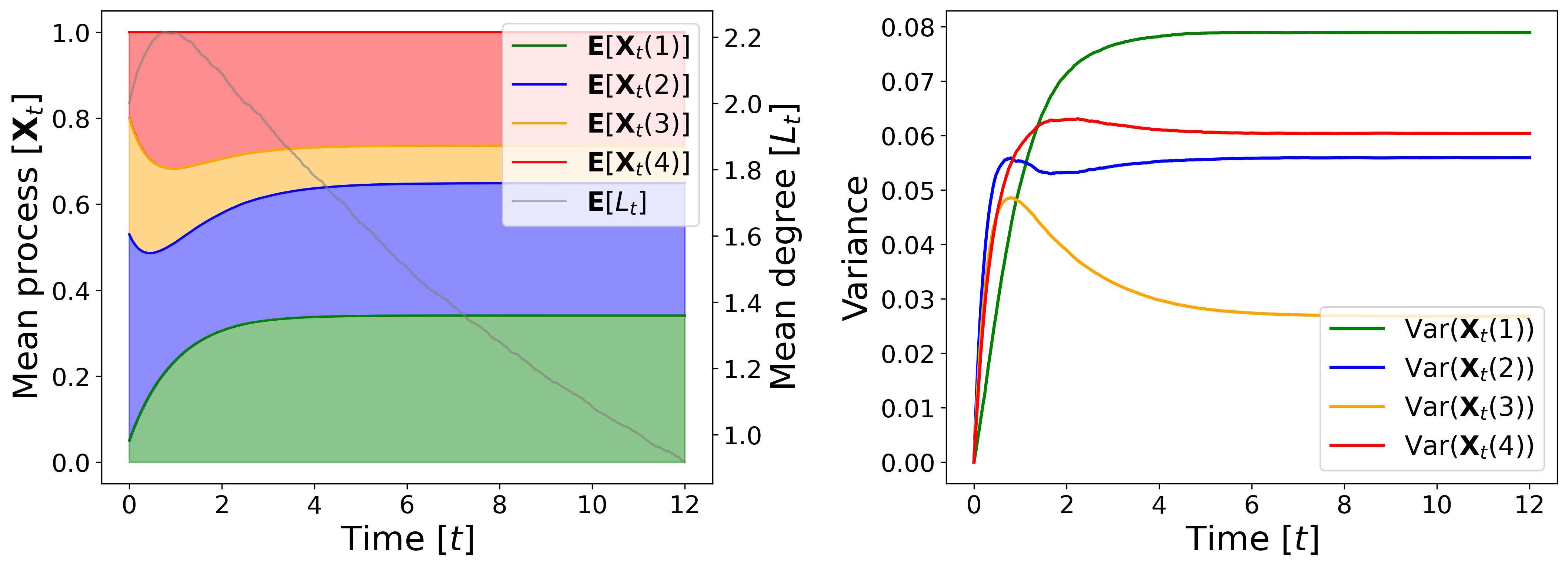}
    \caption{Case~\eqref{cond:stationary_parent_independent}, i.e.~$\bb(1)=1$, and~$\bx_0=[0.05, 0.48, 0.27, 0.2]$. In both plots, $\bX(1)$ (green), $\bX(2)$ (blue), $\bX(3)$ (orange), $\bX(4)$ (red) based on $N=75{,}000$ realisations and for $t\in [0,12]$.
    Left: Mean process (left axis) and superimposed the mean degree process (grey, right axis, started from $L_0=2$).
    Right: Variance process.}
    \label{fig:plot_b1_positive}
    
\end{figure}
To illustrate this simulation ansatz, we consider a special case with $d+1=4$ types. 
We fix $\Lambda(dx)=dx$, corresponding to the Bolthausen--Sznitman case, and set $\kappa = 2$.
In particular, the integral on the right-hand side of~\eqref{assumption:recurrence_L} diverges and thus the corresponding assumption is satisfied.
We focus on a scenario of nontransitive fitness (akin to a rock-paper-scissors dynamic), 
where type $2$ is strictly fitter than type $1$, type $3$ is strictly fitter than types $1$ and $2$, type $4$ is strictly fitter than types $2$ and $3$, and type $1$ is strictly fitter than type $4$. 

This dynamic is formalised by choosing $\bbeta$ with $\lvert \bbeta\rvert = 2$, as listed in the lower part of the table in Figure~\ref{fig:example_model}. 
For simplicity, we assume $\bb(2)=1$. 
We will explore two scenarios: $\bb(1)=0$ (no mutation) and $\bb(1)=1$ (mutation at rate 1), the latter with the mutation matrix listed in the upper part of the table in Figure~\ref{fig:example_model}. 
All drift parameters are summarised in that same figure.

For the model with mutation ($\bb(1)=1$), which possesses a stationary distribution, we set the initial condition for concreteness to $\bx_0=[0.05, 0.48, 0.27, 0.2]$.
We computed the first moment processes, $(\E_{\bx_0}[\bX_t(i)] : t \in [0,T])$ for $i \in \{1, \dots, d+1\}$, as well as the variance processes of type~$i$, defined as $(\E_{\bx_0}[\bX_t(i)^2] - \E_{\bx_0}[\bX_t(i)]^2 : t \in [0,T])$.
We used $N=75{,}000$ independent realisations.
Since $H(\bx,\bV_t)\in[0,1]$, 
Hoeffding's inequality implies that, 
for each fixed $t$ and each estimated moment, 
the empirical mean deviates from the true value by more than $0.005$ with probability at most~$5\%$ (as $2\exp(-2N\cdot 0.005^2)<0.05$).
Consequently, each estimated variance deviates from the
true variance by at most $0.015$ with probability at least~$90\%$.
The variance process is of particular interest for parameter inference.
The results of these simulations for the case with mutation are presented in Figure~\ref{fig:plot_b1_positive}.

For the case without mutation ($\bb(1)=0$), 
we demonstrate the simulation approach by estimating the deviation of the absorption probability from its neutral value: 
without drift, $\bX(i)$ is a bounded martingale, so that $\P_{\bx}(\bX_\infty=\be{i})=\bx(i)$.
Lemma~\ref{lem:boundary_i_nomutation} (with~\eqref{assumption:recurrence_L} in place) implies that for every $i\in \llbracket d+1\rrbracket$, under $\P_{\obv(i)}$,
we have that $\bV=\obv(i)$ whenever $L$ returns to~$1$.
Hence $\bV$ is regenerative at its returns to $\obv(i)$, 
and by (the proof of) Proposition~\ref{prop:asympV} the cycle length $\tau\coloneqq\inf\{t>J_1:\bV_t=\obv(i)\}$, 
where $J_1$ is the first jump time of~$\bV$,
is integrable.
Together with Lemma~\ref{lemf} and the cycle formula for the stationary law of regenerative processes \cite[Ch.~VI, Thm. 1.2]{asmussen2008applied}, this yields
\[
\P_{\bx}(\bX_\infty=\be{i})
=\frac{\E_{\obv(i)}\big[\int_0^{\tau}H(\bx,\bV_s)\,\dd s\big]}{\E_{\obv(i)}[\tau]}.
\]
We estimate the right-hand side by the ratio of empirical means over $75{,}000$ independent cycles. 
The result for $i=2$ (and for $\bx$ with $\bx(3)=0.2$) is displayed in Figure~\ref{fig:plot_b1_0}.

\begin{figure}[t]
        \centering
        \includegraphics[width=0.6\linewidth]{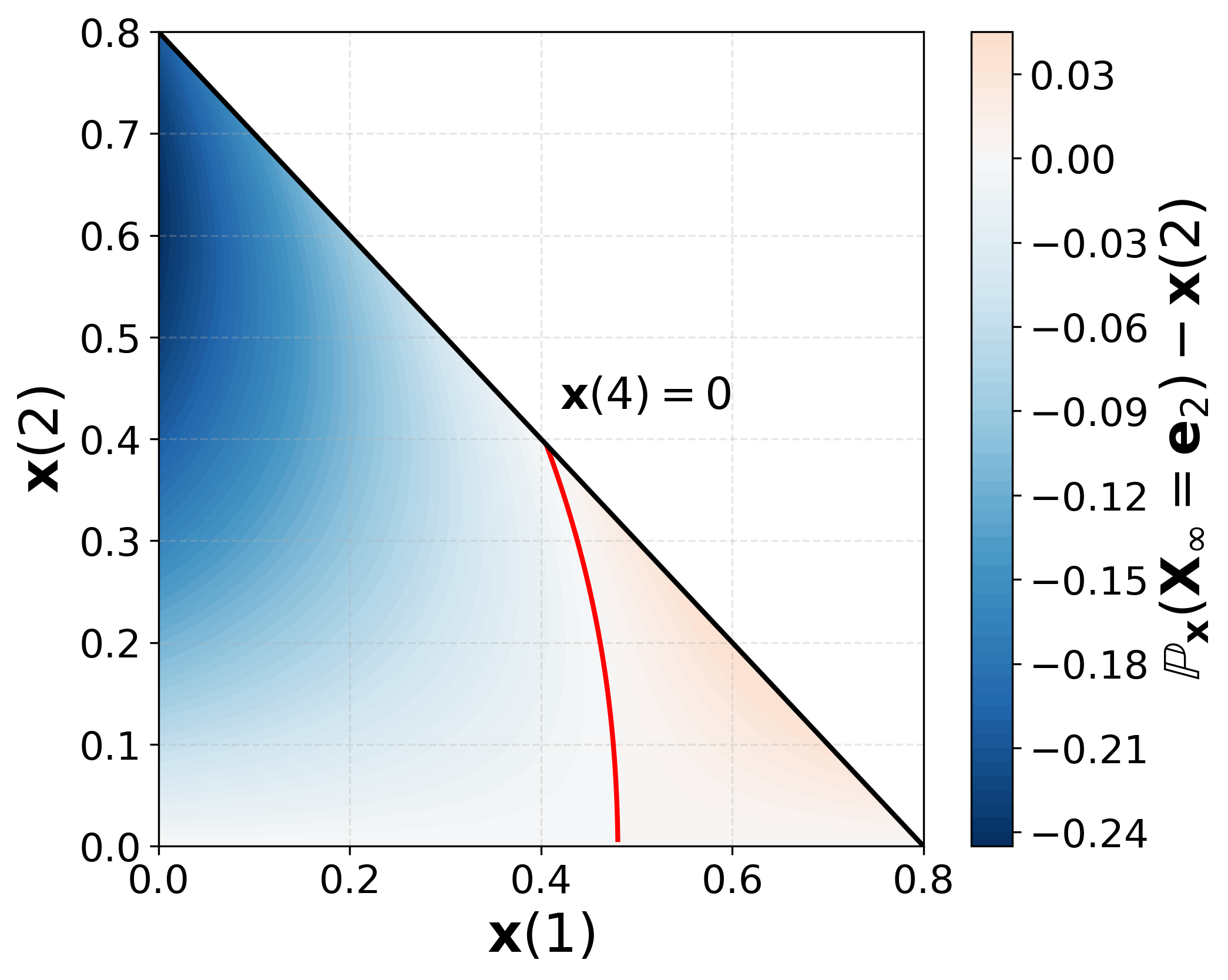}
        \caption{Case~\eqref{cond:absorption}, 
        i.e.\ $\bb(1)=0$. 
        Estimate of the deviation $\P_{\bx}(\bX_\infty=\be{2})-\bx(2)$ of the absorption (fixation) probability of type~$2$ from its neutral value, for $\bx\in\Delta_d$ with $\bx(3)=0.2$, obtained with the regenerative estimator from $75{,}000$ independent cycles.
        The axes show $\bx(1)$ and $\bx(2)$, with the hypotenuse corresponding to $\bx(4)=0$.
        The red curve is the zero level set, separating initial conditions where selection increases (red) or decreases (blue) the fixation probability of type~$2$ relative to the neutral (non-selective) case.}
    \label{fig:plot_b1_0}
\end{figure}

A direct Monte Carlo approach would simulate $\bX$ itself, e.g.\ by an Euler scheme.
This requires a separate set of simulations for every initial condition $\bx$, introduces a time-discretisation bias, and requires truncating the small jumps, since $r^{-2}\Lambda(\dd r)$ has infinite mass for $\Lambda(\dd r)=\dd r$.
In contrast, the dual process is a pure jump process that can be simulated exactly,
and a single set of cycles yields the estimate for all $\bx$ simultaneously.
The moment equations of $\bX$ do not depend on $\bx$ either, but for $\kappa\ge 2$
they do not close, since the degree of the moments increases.
So solving the system of moments numerically requires a truncation, which the simulation approach avoids.

We note some limitations of the current approach.
Under the selected parameters, the degree process generally remains small and thus the Bernstein coefficient process remains computationally tractable. 
In settings where the degree process reaches larger values for longer durations, the performance of the Bernstein-based approach degrades markedly, partly because the cardinality of the set $\Pi(n)$ grows as $\binom{n+d}{d} \sim n^{d}/d!$.

The source code for the simulations will be made available on a public repository.

\subsection{Open problems}
Several open problems remain. 
We discuss a few briefly.

 \begin{enumerate}
     \item \textit{Characterisation of drifts:}
    For Wright--Fisher processes with drifts of the form~\eqref{eq:driftterm}, how large is the subset of drifts that admit a representation as in~\eqref{eq:drift_form}? What about other drifts?
    As a start one could compare in the diffusion case ($\Lambda=\delta_0$) with \cite{shiga1981}, where for that case all viable drifts are characterised.
    Another possible approach would be to generalise the Bernoulli-factory method from the two-type setting \cite{Koskela2024, koskela_well_posedness_2026} to the multi-type case.

    \item \textit{Long-term behaviour for general Markov matrices ~$P$}: 
    When $\bb(1)>0$, we treated irreducible $P$ for the long-term behaviour of the Wright--Fisher process. 
    A more careful analysis should provide a unified treatment for arbitrary Markov matrices $P$, but the implementation might be tedious.

    \item \textit{Selection with purely parent-dependent mutation}:
    Our results do not cover the case $\bb(1)>0$, $\lvert\btheta\rvert=0$ and $\kappa\geq 2$, in which $L$ is positive recurrent but not absorbed.
    Is $\bX$ ergodic in this case, and can its stationary distribution be represented in terms of~$\bV$?

    \item \textit{Algorithmic efficiency}: 
    From an applied perspective, is there a more efficient implementation of the algorithm for the moment process associated with our construction? 
    In particular, can the degree process be controlled or truncated in some way to mitigate the computational cost when it remains large for extended periods?
    
    \item \textit{Transition function expansion}: 
    Can Bernstein duality be used for the derivation of an explicit or approximate transition function expansion?
    \cite{barbour2000} derived in the diffusion case ($\Lambda=\delta_0$) such an expansion using moment duality.
    Can their approach be translated to the Bernstein setting?

    \item \textit{Mixing times}: Moment duality has been used to derive strong stationary times for neutral $\Lambda$-Wright--Fisher processes with mutation \cite{Blancas2023}. 
    Can the Bernstein duality framework be used to derive strong stationary times for more general ergodic $\Lambda$-Wright--Fisher processes? 
    If strong stationary times prove too difficult to obtain, can the mixing rate be established using the recently developed methods of \cite{cordero_distance_2025} in the multi-type setting?
 \end{enumerate}

\section{Proof of the Bernstein duality}\label{sec:proofDuality}

The proof of the Bernstein duality combines an algebraic duality of the generators with the martingale-problem approach of~\cite[Ch.~4]{Ethier1986}.
First, we state the algebraic duality of the generators.
The proof is a lengthy (yet straightforward) calculation and is therefore only given in Appendix~\ref{sec:generator_identity}.
\begin{lemma}\label{lem:generator_identity}
    Let $\cB$ be the generator of the Bernstein coefficient process. 
    For $\bx\in \Delta_d$ and~$\bv\in \bM$, 
    \begin{equation}
		\cA H(\cdot,\bv)(\bx)=\cB H(\bx,\cdot)(\bv).\label{eq:generatorrelation}
	\end{equation}
\end{lemma}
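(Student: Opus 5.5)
The plan is to verify \eqref{eq:generatorrelation} block by block. Both sides are linear in the relevant pieces: $\cA=\cA_{\Lambda(0)}+\cA_{\Lambda_>}+\cA_{\bd}$, and $\cB$ is a sum of jump terms with rates $\binom{n}{k}\lambda_{n,k}$, $n\bb(m)(1-\ind{m=1}\lvert\btheta\rvert)$ and $n\lvert\btheta\rvert\bb(1)$. Fix $\bv\in\bM(n)$. Since $H(\bx,\cdot)$ is linear in its second argument, we have
\[
\cB H(\bx,\cdot)(\bv)=\sum_{\text{jumps}}\text{rate}\cdot\big(H(\bx,O(\bv))-H(\bx,\bv)\big),
\]
and each $O$ is a linear operator. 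We therefore match the $\Lambda$-part of $\cA$ with the $C^{n,k}$-terms, and the drift part with the $D^{n,m}$-terms. It suffices to work with the Bernstein basis polynomials $B_\balpha(\bx)=\binom{n}{\balpha}\bx^\balpha$, and the key device is to rewrite each operator's action in the dual form
\[
H(\bx,O(\bv))=\sum_{\balpha}\bv(\balpha)\,(O^{*}B_\balpha)(\bx),
\]
that is, to compute how the transpose of each operator acts on basis polynomials.

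\textbf{Step 1 (neutral part).} For $\cA_{\Lambda_>}$, expand $f((1-r)\bx+r\be{i})$ for $f=H(\cdot,\bv)$ with the multinomial theorem. This gives a sum over how many of the $n$ factors pick the ``$r\be{i}$'' part. Grouping by $k=$ (that number)$+1$ and integrating against $r^{-2}\Lambda(\dd r)$ produces the weights $\binom nk r^{k}(1-r)^{n-k}$ times $\bx(i)$, i.e. exactly $\lambda_{n,k}$ after combining with $\bx(i)$ and $r^{-2}$. One then has to identify the resulting polynomial with $H(\bx,C^{n,k}\bv)$. The hypergeometric expectation in \eqref{eq:bernstein_diff_operator} is the standard identity $\bx^{\balpha}\cdot\sum_i \bx(i)\cdot(\text{$k-1$ copies of $i$})$ rewritten in the degree-$n$ Bernstein basis. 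The $k=1$ (or $k=0$) terms must cancel against the $-f(\bx)$ term using $\sum_i\bx(i)=1$; the residual total rate is absorbed as the diagonal. The diffusive part $\cA_{\Lambda(0)}$ is handled directly by differentiating $\bx^\balpha$ (with $\bx(d+1)$ dependent, so $\partial_i$ also hits the $(d+1)$st factor). It yields exactly the $k=2$ contribution with the $\Lambda(\{0\})$ piece of $\lambda_{n,2}$, which is the classical Kingman/Wright--Fisher duality.

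\textbf{Step 2 (drift part).} Write $\partial_i$ of $B_\balpha$ as $n(B^{n-1}_{\balpha-\be{i}}-B^{n-1}_{\balpha-\be{d+1}})$. Multiply by $\bd(\bx;i)$, which by \eqref{eq:drift_form} is a combination of degree-$m$ Bernstein polynomials. Products of Bernstein polynomials are Bernstein polynomials of the summed degree, with a hypergeometric reweighting:
\[
B^{n-1}_{\bgamma}B^{m}_{\bbeta}=\frac{\binom{n-1}{\bgamma}\binom{m}{\bbeta}}{\binom{n+m-1}{\bgamma+\bbeta}}B^{n+m-1}_{\bgamma+\bbeta}.
\]
This is precisely where $\HG^{\balpha}_m$ in \eqref{eq:bernstein_drift_operator} comes from. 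The term $\bp(\bbeta;i)$ gives the ``replace the sample by one ball of colour $i$'' part. The term $-\bbeta(i)/m$, summed over $i$, reproduces $-H(\bx,\bv)$ written in degree $n+m-1$ (degree elevation), and this gives the diagonal $-n\bb(m)$.

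For $m=1$, split $P=\btheta\mathbf 1^{\top}+\tilde P$. The $\tilde P$-part gives $D^{n,1}$ at rate $n\bb(1)(1-\lvert\btheta\rvert)$, and the $\btheta$-part gives $D^{n,0}$ at rate $n\lvert\btheta\rvert\bb(1)$. This works because $\sum_i\btheta(i)\bx^{\balpha}$ can be lowered in degree by summing over the added ball. Condition \eqref{eq:cond_probability_0} is needed to ensure $\balpha-\HG^\balpha_m+\be{i}\in\N_0^{d+1}$ whenever the coefficient is nonzero, so that $D^{n,m}$ is well defined.

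\textbf{Main obstacle.} I expect the main obstacle to be bookkeeping rather than ideas: keeping track of the dependent coordinate $\bx(d+1)$ in the derivatives, and verifying that all diagonal (``$-f$'') contributions sum to exactly the total jump rate of $\cB$ from $\bv$. I would first do the computation for a single basis vector $\bv=\bv^{\balpha}$ and then extend by linearity. I would also check the case $d=1$ against \cite{Cordero2022} as a sanity test.
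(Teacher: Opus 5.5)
Your proposal is correct and follows essentially the same route as the paper's proof in Appendix~\ref{sec:generator_identity}. That proof is a block-by-block check with three parts. First, the binomial expansion of $f((1-r)\bx+r\be{i})$ yields the $C^{n,k}$-jumps, once the contributions in which none or exactly one factor picks $r\be{i}$ are combined with $-f$ \emph{inside} the integral (they are in general not separately integrable against $r^{-2}\Lambda(\dd r)$). Second, direct differentiation yields the $\Lambda(\{0\})$-part of the $C^{n,2}$-jump. Third, the drift is re-indexed with hypergeometric weights, and the $m=1$ term is split along $P_{ij}=\btheta(j)+\tilde P_{ij}$, so $P=\mathbf{1}\btheta^{\top}+\tilde P$ rather than $\btheta\mathbf{1}^{\top}+\tilde P$.

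There are two small slips. The merger size $k$ is the number of factors picking $r\be{i}$ itself, not that number plus one; this is what produces $r^{k}(1-r)^{n-k}$ and the monomial $\bx^{\balpha-(k-1)\be{i}}$. Also, condition~\eqref{eq:cond_probability_0} is not needed for $D^{n,m}$ to be well defined, since $\HG^{\balpha}_m\leq\balpha$ componentwise always; it plays no role in this identity and is used only for Lemma~\ref{lem:boundary_i_nomutation}.
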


The following lemma provides some basic properties of the operators introduced in Definition~\ref{def:bramutoperators}.

\begin{lemma}\label{lem:actionV}
	Fix the process parameters~$\cP$.
    Let $n\in \N$ and $\bv\in \bM(n)$. 
    Then for any $k\in \llbracket 2,n\rrbracket$, $m\in\llbracket 0,\kappa\rrbracket$, we have $\lVert C^{n,k} (\bv)\rVert_{\infty}\leq \lVert \bv \rVert_{\infty}$ and $\lVert D^{n,m} (\bv)\rVert_{\infty} \ \leq \lVert \bv \rVert_{\infty}$. In particular, for all $\bw\in \cS_{\cP}(\bv)$: $\lVert\bw\rVert_{\infty}\leq\lVert\bv\rVert_{\infty}$.
\end{lemma}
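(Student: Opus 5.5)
The idea is to observe that each Bernstein operator of Definition~\ref{def:bramutoperators} acts coordinatewise as an averaging operation. For each target index $\balpha$, the new coefficient is an expectation, or finite convex combination, of values of $\bv$ at admissible indices. Hence its absolute value is at most $\lVert\bv\rVert_\infty$. The plan is to verify this convexity separately for each of the four operator families, and then conclude by induction over compositions.

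For $C^{n,k}$, fix $\balpha\in\Pi(n-k+1)$; note $\lvert\balpha\rvert\geq 1$ since $k\leq n$. Then $\HG^\balpha_1$ is a random unit vector $\be{j}$ with $\balpha(j)\geq 1$, chosen with probability $\balpha(j)/\lvert\balpha\rvert$. So $\balpha+(k-1)\HG^\balpha_1\in\Pi(n)$, and $C^{n,k}(\bv)(\balpha)$ is a probability-weighted average of entries of $\bv$. Its modulus is therefore bounded by $\lVert\bv\rVert_\infty$.

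For $D^{n,m}$ with $m\geq 2$, fix $\balpha\in\Pi(n+m-1)$, so $\lvert\balpha\rvert\geq m$ and $\HG^\balpha_m$ is well defined. Inside the expectation we have $\sum_i \bp(\HG^\balpha_m;i)\,\bv(\balpha-\HG^\balpha_m+\be{i})$, and the weights $\bp(\cdot;i)$ are nonnegative and sum to $1$. The one point to check is that every index with positive weight lies in $\Pi(n)$. The vector $\balpha-\HG^\balpha_m$ is nonnegative with total $n-1$, so adding $\be{i}$ gives an element of $\Pi(n)$. (Condition~\eqref{eq:cond_probability_0} is not even needed here.) So the entry is again a convex combination.

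The $D^{n,1}$ case works the same way, with weights $\tilde{\bp}(\be{j};i)/(1-\lvert\btheta\rvert)$. These are nonnegative, since $\tilde P_{j,i}=P_{j,i}-\btheta(i)\geq 0$ by the choice of $\btheta$ as a column minimum. They sum to $1$ because $\sum_i \tilde P_{j,i}=1-\lvert\btheta\rvert$. The case $\lvert\btheta\rvert=1$ is trivial, since there $D^{n,1}$ is the identity. For $D^{n,0}$, the weights $\btheta(i)/\lvert\btheta\rvert$ form a probability vector on $\llbracket d+1\rrbracket$ and $\balpha+\be{i}\in\Pi(n)$; the case $\lvert\btheta\rvert=0$ gives the zero vector.

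Finally, any $\bw\in\cS_{\cP}(\bv)$ is $O(\bv)$ for a finite compatible composition $O$. Induction on the number of factors gives $\lVert\bw\rVert_\infty\leq\lVert\bv\rVert_\infty$, with the empty composition as the base case. There is no real obstacle; the only mildly delicate points are the index bookkeeping (domains of $\HG$ and membership in $\Pi(n)$) and the nonnegativity of $\tilde P$.
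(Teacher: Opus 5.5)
Your proposal is correct and takes essentially the same approach as the paper. The paper only says the bounds follow directly from the definitions; you spell this out by showing that each operator entry is a convex combination of entries of $\bv$ (including the nonnegativity of $\tilde P$, which comes from the column-minimum choice of $\btheta$), and then you induct over compositions.
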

\begin{proof}
	The result follows in a straightforward way from the definition of the operators.
\end{proof}

The following lemma provides some bounds on~$\bV$ needed in the proof of Theorem~\ref{thm:Bernsteinduality}.

\begin{lemma}\label{condq}
Fix the parameter set~$\cP$. 
Let~$\cB$ be the rate matrix of~$\bV$, and set $c\coloneqq\tfrac12\Lambda([0,1])+\sum_{m=1}^{\kappa}\bb(m)$. 
Then for all $\bw\in\bM$,
\begin{equation}\label{conditionq2}
\sum_{\bu\in\bM \setminus\{\bw\}}\cB(\bw,\bu)\leq c\dim(\bw)^2 .
\end{equation}
Moreover, $\bV$ is non-explosive, 
and $\E_{\bv}[\sup_{s\leq t}L_s^2]<\infty$ for all $\bv\in\bM$ and $t\geq0$.
\end{lemma}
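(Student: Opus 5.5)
The plan has three parts: bound the total jump rate out of a state, deduce non-explosion from the fact that the number of jumps is controlled by the degree, and then dominate $L$ by a pure-birth process to get the second moment.

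\emph{Step 1: the rate bound~\eqref{conditionq2}.} Fix $\bw\in\bM(n)$. By Definition~\ref{def:BCP} the total rate out of $\bw$ is at most
\[
\sum_{k=2}^n\tbinom{n}{k}\lambda_{n,k}+\sum_{m=1}^{\kappa}n\bb(m)(1-\ind{m=1}\lvert\btheta\rvert)+n\lvert\btheta\rvert\bb(1).
\]
The drift part is $n\sum_m\bb(m)\le n^2\sum_m\bb(m)$. For the coalescence part, use $\tbinom{n}{k}\le\tbinom{n}{2}\tbinom{n-2}{k-2}$ and swap sum and integral:
\[
\sum_{k=2}^n\tbinom{n}{k}\lambda_{n,k}\le\tbinom{n}{2}\int\sum_{j=0}^{n-2}\tbinom{n-2}{j}z^{j}(1-z)^{n-2-j}\Lambda(\dd z)=\tbinom n2\Lambda([0,1])\le \tfrac{n^2}{2}\Lambda([0,1]).
\]
Adding the two parts gives~\eqref{conditionq2}. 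For $n=0$ there are no transitions, so the bound holds trivially.

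\emph{Step 2: non-explosion.} Upward jumps of $L$ have size at most $\kappa-1$ and occur at total rate at most $n\sum_m\bb(m)=:nb$. So $L$ is stochastically dominated by a pure-birth chain $Y$ with $Y_0=L_0$ that jumps $n\mapsto n+\kappa-1$ at rate $bn$. This is a linear-rate branching-type process; it is non-explosive and satisfies $\E[Y_t^2]<\infty$ for all $t$. The domination is made explicit by a coupling: whenever $L$ jumps up we let $Y$ jump as well, and downward moves of $L$ only help. Alternatively, one can argue directly that $L_s\le L_0+(\kappa-1)N_s$, where $N$ counts the upward jumps, and check via Gronwall that $\E[L_{t\wedge\tau_K}^2]\le C(t)$ uniformly in the localising times $\tau_K=\inf\{s:L_s\ge K\}$.

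Given this, $\sup_{s\le t}L_s$ is a.s.\ finite. On the event $\{\sup_{s\le t} L_s\le K\}$ the total jump rates are bounded by $cK^2$ by Step 1, so only finitely many jumps happen in $[0,t]$. Hence $\bV$ does not explode. Since $\bV$ is determined by its jump chain and the jump times, non-explosion of the jump count is exactly non-explosion of $\bV$.

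\emph{Step 3: the second moment.} Since $Y$ is nondecreasing, $\sup_{s\le t}L_s\le Y_t$ under the coupling. It therefore suffices to show $\E[Y_t^2]<\infty$. Apply Dynkin's formula to $f(y)=y^2$ with localisation:
\[
\frac{\dd}{\dd t}\E[Y_{t\wedge\tau_K}^2]\le \E\big[bY\big((Y+\kappa-1)^2-Y^2\big)\big]\le C\,\E[Y^3],
\]
which does not close at second order. So I will instead use the explicit structure of $Y$: it is a Yule-type process, where each individual independently produces $\kappa-1$ offspring at rate $b$. For such a process the moment generating function is finite on a neighbourhood of zero, and all moments are finite. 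Equivalently, close the moment equations order by order: $\frac{\dd}{\dd t}\E[Y]=b(\kappa-1)\E[Y]$, and $\frac{\dd}{\dd t}\E[Y^2]=b\,\E[Y(2(\kappa-1)Y+(\kappa-1)^2)]$ is linear in $\E[Y^2]$ and $\E[Y]$. This closes, and Gronwall plus monotone convergence in $K$ gives $\E[Y_t^2]\le C(t,L_0)$. Note that $Y$'s rate is linear, not quadratic, which is what makes the system close.

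\emph{Main obstacle.} The delicate point is that the total rate grows quadratically while only the upward rates are linear. The coupling has to respect this: $Y$ may only inherit the upward jumps of $L$, and this requires that at each state the up-rate of $L$ is at most $bL$. That holds because the $D^{n,m}$ rates are $n\bb(m)$. The quadratic coalescence rates then play no role in the moment bound and enter only through the finiteness of jump counts in Step 2.
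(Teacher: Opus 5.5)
Your proposal is correct and follows essentially the same route as the paper: you bound the coalescence rates by $\binom{n}{2}\Lambda([0,1])$ and the drift rates by $n\sum_m \bb(m)$, then dominate $L$ by thinning with a pure-birth branching process $Y$, which gives both non-explosion and $\E_{\bv}[\sup_{s\le t}L_s^2]\le \E[Y_t^2]<\infty$; your $Y$, with every jump of size $\kappa-1$ at rate $n\sum_{m}\bb(m)$, is simply a cruder choice than the paper's, and you spell out the moment bound that the paper only asserts. One small correction: your aside that the Dynkin bound for $y^2$ ``does not close at second order'' is wrong, because $by\big((y+\kappa-1)^2-y^2\big)=2b(\kappa-1)y^2+b(\kappa-1)^2y$ is already quadratic in $y$ (as your own next computation shows), so localisation plus Gronwall applies directly.
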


\begin{proof}
Let $\bw\in\bM(n)$. 
The transitions $C^{n,k}$, $k\in\llbracket2,n\rrbracket$, 
have total rate $\sum_{k=2}^n\tbinom{n}{k}\lambda_{n,k}
\leq \tbinom{n}{2}\Lambda([0,1])$.
The transitions $D^{n,0}$ and $D^{n,1}$ have total rate $n\lvert\btheta\rvert\bb(1)+n(1-\lvert\btheta\rvert)\bb(1)=n\,\bb(1)$, 
and
$D^{n,m}$ has rate $n\,\bb(m)$ for $m\in\llbracket2,\kappa\rrbracket$.
Summing gives~\eqref{conditionq2}.

Next, $L$ can be coupled with a pure-birth process~$Y$, started at $Y_0=L_0$ and jumping from $n$ to $n+m-1$ at rate $n\,\bb(m)$ for $m\in\llbracket2,\kappa\rrbracket$, 
such that $L_s\leq Y_s$ for all $s\geq0$. 
Indeed, while $L_s\leq Y_s$, the upward rates satisfy $L_s\bb(m)\leq Y_s\bb(m)$, 
so every upward jump of~$L$ can be realised as a jump of the same size of~$Y$ by thinning, 
and all other jumps of~$L$ are downward.
Since $Y$ is a non-decreasing continuous-time Markov branching process with bounded
offspring, 
$Y_t<\infty$ almost surely and $\E[Y_t^2]<\infty$ for all $t\geq0$.
Hence $\E_{\bv}[\sup_{s\leq t}L_s^2]\leq\E[Y_t^2]<\infty$. 
Moreover, by~\eqref{conditionq2}
the jump rates of $\bV$ on $[0,t]$ are bounded by $cY_t^2<\infty$, 
so $\bV$ makes only finitely many jumps on bounded time intervals, 
i.e.\ $\bV$ is non-explosive.
\end{proof}

Finally, we prove the well-posedness of the martingale problem associated to $\cA$, and the Bernstein duality.

\begin{proof}[Proof of Theorem~\ref{thm:Bernsteinduality}]
First we establish existence of a solution to the $(\cA,C^2(\Delta_d))$-martingale problem and the duality.
The operator $\cA$ maps $C^2(\Delta_d)$ into $C(\Delta_d)$ and satisfies the positive
maximum principle.
Thus, by~\cite[Thm.~4.5.4]{Ethier1986}, a solution to the martingale problem exists for every initial condition.
Now let $\bX$ be any solution with $\bX_0=\bx$. Fix $\bv\in\bM$ and let $\bV$ be
independent of~$\bX$ with $\bV_0=\bv$.
By Lemma~\ref{lem:generator_identity}, $\cA H(\cdot,\bw)(\bx)=\cB H(\bx,\cdot)(\bw)$ for all $\bx\in\Delta_d$ and $\bw\in\bM$.
Using Lemma \ref{lem:actionV}, 
we obtain
	\begin{equation}\label{int1}
	\sup_{\bx\in \Delta_d ,\bw \in \cS_{\cP}(\bv)} \lvert H(\bx,\bw)\rvert \leq \lVert \bv\rVert_\infty, 
	\end{equation}
so $H(\cdot,\cdot)$ is bounded on $\Delta_d\times \cS_{\cP}(\bv)$.
By Lemma~\ref{condq}, $\bV$ is non-explosive, and by~\eqref{int1} and~\eqref{conditionq2},
$\lvert\cB H(\bx,\cdot)(\bw)\rvert\le 2c\lVert\bv\rVert_\infty\dim(\bw)^2$ for all
$\bx\in\Delta_d$ and $\bw\in\cS_{\cP}(\bv)$, with $c$ as in Lemma~\ref{condq}.
Moreover, $\E_{\bv}[\sup_{s\le t}L_s^2]<\infty$ by Lemma~\ref{condq}.
Hence the integrability conditions of \cite[Thm.~4.4.12]{Ethier1986} are satisfied.
In particular, \cite[Cor.~4.4.13]{Ethier1986} yields, for all $t\geq 0$, $\E[H(\bX_t,\bv)]=\E_{\bv}[H(\bx,\bV_t)]$.
The same argument applies if $\bX_0\sim\nu$ for a probability measure $\nu$ on $\Delta_d$ with $\bX_0$ independent of $\bV$.

Now, we establish well-posedness.
The functions $H(\cdot,\bv)$, $\bv\in\bM$, span all polynomials, which are separating on
$\Delta_d$. 
By the first part of this proof, 
the one-dimensional distributions of any solution are therefore uniquely determined, 
and uniqueness of the martingale problem follows from~\cite[Thm.~4.4.2]{Ethier1986}. 
Together with existence, the martingale problem is well posed.

Next, we prove that $\bX$ is Feller.
Let $T_tf(\bx)\coloneqq\E_{\bx}[f(\bX_t)]$ for $f\in C(\Delta_d)$. 
For $f=H(\cdot,\bv)$, the first part of this proof gives
$T_tf(\bx)=\E_{\bv}[H(\bx,\bV_t)]$, 
which is
continuous in~$\bx$ by dominated convergence, 
since each $H(\cdot,\bw)$ is a polynomial bounded
by $\lVert\bv\rVert_\infty$ because of~\eqref{int1}. 
Moreover, $H(\bx,\bV_t)=f(\bx)$ unless $\bV$ jumps
before time~$t$.
Since $\bV$ has a finite jump rate at~$\bv$ by Lemma~\ref{condq} we thus have
$\sup_{\bx}\lvert T_tf(\bx)-f(\bx)\rvert\le
2\lVert\bv\rVert_\infty\P_{\bv}(\bV \text{ jumps before } t)\to0$ as $t\to0$.
Since every polynomial on $\Delta_d$ is of the form $H(\cdot,\bv)$, 
polynomials are dense in $C(\Delta_d)$, 
and each $T_t$ is a contraction, 
both properties extend to all $f\in C(\Delta_d)$.
Hence $(T_t)_{t\geq0}$ is a Feller semigroup.

Finally, using the martingale property,
$T_tf-f=\int_0^tT_s\cA f\,\dd s$ for $f\in C^2(\Delta_d)$.
Since $\cA f\in C(\Delta_d)$ and $(T_t)_{t\geq0}$ is strongly continuous, this shows that its generator acts on such $f$ as~$\cA f$.
\end{proof}

\section{Proof of the long-term behaviour}\label{sec:longterm}

\subsection{$\bb(1)=0$: Proof of Theorem \ref{fixext}} \label{sec:prooffixation}
In this section we assume the underlying parameter set~$\cP$ to be such that $\bb(1)=0$. 

We start with the following lemma, which provides some basic properties of the operators introduced in Definition~\ref{def:bramutoperators}.
Recall that $\obv(i;k)\in \bM(k)$ is the function that assigns~$1$ to $k\be{i}$ and~$0$ else.

\begin{lemma}\label{lem:boundary_i_nomutation}
Fix~$\mathcal{P}$ with $\bb(1)=0$.
For all $\bv\in \bM$ and $\bw\in \cS_{\cP}(\bv)$: $\bw(\dim(\bw)\be{i})=\bv(\dim(\bv)\be{i})$, $i\in \llbracket d+1\rrbracket$. 
Fix $i\in \llbracket d+1\rrbracket$ and $n,k\in \N$.
For any $\bv\in \cS_{\cP}(\obv(i;k))\cap\bM(n)$:
\begin{enumerate}
    \item $(C^{n,\ell}(\bv))(\balpha)=0$ for all $\balpha\in \Pi(n-\ell+1)$ with $\balpha(i)=0$, $\ell\in \llbracket 2,n\rrbracket$.
    \item $(D^{n,m}(\bv))(\balpha)=0$ for all $\balpha\in \Pi(n+m-1)$ with $\balpha(i)=0$, $m\in \llbracket 2,\kappa\rrbracket$.
    \item $H(\bx,\bv)=0$ for all $\bx\in \partial_{i}\Delta_{d}$.
    \item $H(\be{i},\bv)=\bv(\dim(\bv)\be{i})$.
\end{enumerate}
\end{lemma}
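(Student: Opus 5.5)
With $\bb(1)=0$, only the operators $C^{n,\ell}$ ($\ell\ge 2$) and $D^{n,m}$ ($m\ge 2$) are compatible. Every claim should therefore follow by checking these two operator types and then inducting over the length of the composition in $\mathbb{O}_{\cP}$.

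\textbf{Preservation of vertex coefficients.} It suffices to show $C^{n,\ell}(\bv)((n-\ell+1)\be{i})=\bv(n\be{i})$ and $D^{n,m}(\bv)((n+m-1)\be{i})=\bv(n\be{i})$. The general statement for $\bw\in\cS_{\cP}(\bv)$ then follows by induction on the number of factors in $O\in\mathbb{O}_{\cP}$.
For $C$, take $\balpha=(n-\ell+1)\be{i}$. The hypergeometric draw $\HG^{\balpha}_1$ equals $\be{i}$ almost surely, so the entry is $\bv(n\be{i})$.
For $D$, take $\balpha=(n+m-1)\be{i}$. Then $\HG^{\balpha}_m=m\be{i}$ almost surely. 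Since $\lvert m\be{i}\rvert=m>1$ and $\sum_j\bp(m\be{i};j)=1$, condition~\eqref{eq:cond_probability_0} forces $\bp(m\be{i};j)=0$ for $j\ne i$, hence $\bp(m\be{i};i)=1$. The entry is therefore $\bv(\balpha-m\be{i}+\be{i})=\bv(n\be{i})$.

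\textbf{Items (1) and (2).} I will first prove the invariant $(\star)$: every $\bv\in\cS_{\cP}(\obv(i;k))$ vanishes on $\{\balpha:\balpha(i)=0\}$. This holds for $\obv(i;k)$ itself, and (1) and (2) are exactly the statement that $(\star)$ is preserved by $C^{n,\ell}$ and $D^{n,m}$. So I will prove preservation and conclude by induction on the length of the composition.
For $C$: if $\balpha(i)=0$, then $\HG^{\balpha}_1$ has zero $i$-component, so $\balpha+(\ell-1)\HG^{\balpha}_1$ also has zero $i$-component. By $(\star)$, $\bv$ vanishes there, and hence so does the expectation.
For $D$: if $\balpha(i)=0$, then $\HG^{\balpha}_m(i)=0$. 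By~\eqref{eq:cond_probability_0}, $\bp(\HG^{\balpha}_m;i)=0$ (again because $m>1$). Every term with $\bp(\HG^{\balpha}_m;j)\neq 0$ therefore has $j\ne i$, and then $\balpha-\HG^{\balpha}_m+\be{j}$ has zero $i$-component. So each term vanishes by $(\star)$.
This is the only place where the structural assumption~\eqref{eq:cond_probability_0} really matters. It is also the reason the condition $\bb(1)=0$ is needed: the mutation operators $D^{\cdot,1}$ and $D^{\cdot,0}$ could create the colour-$i$-free compositions that $(\star)$ excludes.

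\textbf{Items (3) and (4).} For (3), if $\bx\in\partial_i\Delta_d$, then $\bx^{\balpha}=0$ whenever $\balpha(i)>0$. Every term of $H(\bx,\bv)$ with $\balpha(i)=0$ vanishes by $(\star)$, so $H(\bx,\bv)=0$.
For (4), at the vertex $\be{i}$ we have $\bx^{\balpha}=\ind{\balpha=n\be{i}}$. Since $\binom{n}{n\be{i}}=1$, this gives $H(\be{i},\bv)=\bv(n\be{i})$, for any $\bv$.

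The main obstacle is organising the induction cleanly. Items (1) and (2) are stated for $\bv$ in the orbit, so they must be proved simultaneously with $(\star)$ rather than assumed. Beyond that, the only care needed is checking that the hypergeometric supports behave as claimed, which is routine.
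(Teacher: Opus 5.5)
Your proposal is correct and follows essentially the same route as the paper: an induction on the number of composed operators, showing that vanishing on $\{\balpha:\balpha(i)=0\}$ is preserved by $C^{n,\ell}$ and $D^{n,m}$ (with \eqref{eq:cond_probability_0} killing the $j=i$ summand), followed by the same direct arguments for (3) and (4). Your explicit check of the vertex-coefficient claim, via $\HG^{\balpha}_m=m\be{i}$ almost surely and $\bp(m\be{i};i)=1$, simply fills in the step the paper calls straightforward.
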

\begin{proof}
    The first claim follows in a straightforward way from the definition of the operators and~\eqref{eq:cond_probability_0}.
    Fix $i\in \llbracket d+1\rrbracket$ and $n,k\in \N$.
    We prove (1) and (2) by induction on the number of concatenated operators to produce some~$\bw\in \bM$ from~$\obv(i;k)$.
    For the base case, it suffices to observe that $\obv(i;k)$ only assigns a positive value to $k\cdot\be{i}$ and thus (1)--(2) are true in this case.
    For the induction step, assume the property holds for some~$\bv\in\cS_{\cP}(\obv(i;k))$, we now show that it also holds for (i) $\bw = C^{n,\ell}(\bv)\in \bM(n-\ell+1)$ and (ii) $\bw = D^{n,m}(\bv)\in \bM(n+m-1)$.
    For (i), let $\balpha\in \Pi(n-\ell+1)$ with $\balpha(i)=0$.
    By the definition of~$C^{n,\ell}$ in \eqref{eq:bernstein_diff_operator}, in the evaluation of $\bw(\balpha)$ the expectation $\mathfrak{E}[\bv(\balpha+(\ell-1)\, \HG^{\balpha}_1)] $ has to be computed.
    To this end, we require the values of $\bv$ assigned to $(\balpha+(\ell-1)\be{j})$, $j\in \llbracket d+1\rrbracket$.
    Since for $j\neq i$, it holds that $(\balpha+(\ell-1)\be{j})(i)=\balpha(i)=0$; 
    in particular by the induction assumption, $\bv(\balpha+(\ell-1)\be{j})=0$ 
    and thus $\mathfrak{E}[\bv(\balpha+(\ell-1)\, \HG^{\balpha}_1) \mid \HG^{\balpha}_1=\be{j}]=0$.
    The probability for the event $\{\HG^{\balpha}_1=\be{i}\}$ is $\balpha(i)/(n-\ell+1)$ and so its contribution to the expectation is also~$0$.
    Thus, $\bw(\balpha)=0$ and part (i) is proved.
    For part (ii), let $\balpha\in \Pi(n+m-1)$ with $\balpha(i)=0$.
    By the definition of~$D^{n,m}$ in~\eqref{eq:bernstein_drift_operator}, evaluating $\bw(\balpha)$ involves computing $\mathfrak{E}[\sum_{j=1}^{d+1}\bp(\HG^{\balpha}_m;j)\, \bv(\balpha-\HG^{\balpha}_m+\be{j})]$.
    Consider the event $\{\HG^{\balpha}_m= \bbeta\}$ for some~$\bbeta\leq\balpha$ (with $\lvert \bbeta \rvert =m$).
    It then holds that~$\bbeta(i)=0$ (because~$\balpha(i)=0$ by assumption).
    To compute the conditional expectation on such an event,
    $\bv$ has to be evaluated on $\balpha-\bbeta+\be{j}$, $j\in \llbracket d+1\rrbracket$. 
    For $j\neq i$, $(\balpha-\bbeta+\be{j})(i)=(\balpha-\bbeta)(i) = \balpha(i)=0$ and thus~$\bv(\balpha-\bbeta+\be{j})=0$ by the induction assumption.
    The summand corresponding to $j=i$, has prefactor $\bp(\bbeta;i)$, which is~$0$ because of~\eqref{eq:cond_probability_0}.
    Since $\HG^{\balpha}_m$ is supported on $\cup_{\bbeta\leq \balpha, \lvert \bbeta \rvert= m}\{\HG^{\balpha}_m= \bbeta\}$,
    also $\bw(\balpha)=0$. 
    This proves~(ii).
    The proof for (1) and (2) is thus completed.
    For~(3), let $\bv\in\cS_{\cP}(\obv(i;k))\cap\bM(n)$ and $\bx\in \partial_{i}\Delta_{d}$.
    For all $\balpha$ with $\balpha(i)>0$, it holds that $\bv(\balpha) \bx(i)^{\balpha(i)}\prod_{j\neq i}\bx(j)^{\balpha(j)}=0$, because $\bx(i)=0$.
    For all $\balpha$ with $\balpha(i)=0$ it was just proved that $\bv(\balpha)=0$ and thus also $\bv(\balpha) \bx^{\balpha}=0$.
    This proves (3).
    Finally, (4) follows in a straightforward way from the definition of~$H$.
    This completes the proof.
    
\end{proof}

The first part of the previous lemma implies that $(\bV_r(L_r\be{i}):i\in \llbracket d+1 \rrbracket)$ is constant under $\bb(1)=0$.
Thus, the positive recurrence of~$L$ under~\eqref{assumption:recurrence_L} translates to~$\bV$.
We now make this precise.

\begin{proof}[Proof of Proposition~\ref{prop:asympV}] 
The proof adapts~\cite[Proof of Prop.~2.3]{Cordero_2026} to the multi-dimensional setting.
Assume $\bb(1)=0$.
Lemma~\ref{lem:boundary_i_nomutation} yields that for all $i\in \llbracket d+1\rrbracket$ the function $r\mapsto \bV_{r}(L_r\be{i})$ is constant. 
In particular, for $\hat{\bv}\in \bM(1)$ the restriction of $\bV$ to the set $\cS_{\cP}(\hat{\bv})$ as defined in~\eqref{defcvv} is an irreducible continuous-time Markov chain.
Indeed, for $\bw,\bw'\in \cS_{\cP}(\hat{\bv})$ to go from $\bw$ to $\bw'$, 
first go from $\bw$ to $\hat{\bv}$ by successive applications of $C^{\cdot,\cdot}$. 
Then go from $\hat{\bv}$ to $\bw'$ in a finite number of successive applications of $C^{\cdot,\cdot}$ and $D^{\cdot,\cdot}$.
 Let us now assume that~\eqref{assumption:recurrence_L} is satisfied. 
	\smallskip
	
	(1) By Lemma~\ref{lem:L_rec_abs}, \eqref{assumption:recurrence_L} implies that $1$ is positive recurrent for $L$.
    Since for every $i\in \llbracket d+1 \rrbracket$ the function $r\mapsto \bV_{r}(L_r\be{i})$ is constant, 
    the positive recurrence of~$1$ for~$L$ translates to the positive recurrence of~$\hat{\bv}$ for~$\bV$.
    By irreducibility of~$\bV$ on $\cS_{\cP}(\hat{\bv})$, the process $\bV$ is positive recurrent, 
    and hence it admits a unique invariant distribution $\mu^{\hat{\bv}}$~\cite[Thm. 3.5.2, Thm. 3.5.3]{norris1998markov} with support $\cS_{\cP}(\hat{\bv})$. 

	\smallskip
	
	(2) Let ${\bV}_{\infty}^{\hat{\bv}}\sim \mu^{\hat{\bv}}$.
    If $\bV_0=\hat{\bv}$, then ${\bV}_r\to {\bV}_{\infty}^{\hat{\bv}}$ in law as $r\to\infty$ by classic Markov chain theory~\citep[Thm. 3.6.2]{norris1998markov}.
    On the other hand, for $\bv \in \bM$ with $\bv(\dim(\bv)\be{i})=\hat{\bv}(\be{i})$, 
    because~$1$ is positive recurrent for~$L$, 
    the process $\bV$ starting at $\bV_0=\bv$ enters $\cS_{\cP}(\hat{\bv})$ in finite time $\P_{\bv}$-almost surely 
    (in particular $\cS_{\cP}(\hat{\bv}) \subset \cS_{\cP}(\bv)$).
    Hence, the result of Proposition~\ref{prop:asympV} follows. 
\end{proof}

Recall that $\obv(i)=\obv(i;1)\in \bM(1)$, i.e. it is the function that assigns~$1$ to $\be{i}$ and~$0$ else.

\begin{lemma} \label{lemf}
Assume $\bb(1)=0$ and that \eqref{assumption:recurrence_L} holds.
For $i\in \llbracket d+1 \rrbracket$,
let $h^{(i)}:\Delta_d\rightarrow [0,\infty)$ be defined by $h^{(i)}(\bx)\coloneqq \E[H(\bx,\bV^{\obv(i)}_{\infty})]$, where the random variable $\bV^{\obv(i)}_{\infty}$ is as in Proposition~\ref{prop:asympV}.
For any $\bx \in \Delta_d$ and $k \in  \N$, we have 
\begin{align}
h^{(i)}(\bx)=\lim_{r\to\infty} \E_{\obv(i;k)}[ H(\bx,\bV_r)], \label{feqlimhxvt}
\end{align}
where the limit in the right-hand side does not depend on the choice of $k \geq 1$. 
Moreover, $h^{(i)}$ is continuous and satisfies for $\bx\in\partial_{i}\Delta_{d}$ that $h^{(i)}(\bx)=0$, $h^{(i)}(\be{i})=1$ and for $\bx\in \Delta_d\setminus(\partial_i\Delta_d\cup\{\be{i}\})$ we have $h^{(i)}(\bx) \in (0,1)$.
\end{lemma}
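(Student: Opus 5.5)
The plan is to build everything from Proposition~\ref{prop:asympV}, the boundedness of $H$ on $\Delta_d\times\cS_{\cP}(\obv(i;k))$, and Lemma~\ref{lem:boundary_i_nomutation}.

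\emph{Step 1: the limit \eqref{feqlimhxvt}.} Fix $i$ and $k\in\N$, and note that $\obv(i;k)(k\be{j})=\ind{i=j}=\obv(i)(\be{j})$ for every $j$. Hence Proposition~\ref{prop:asympV}(2) applies with $\hat{\bv}=\obv(i)$ and $\bw=\obv(i;k)$: under $\P_{\obv(i;k)}$, the process $\bV_r$ converges in distribution (in the discrete topology on the countable set $\cS_{\cP}(\obv(i;k))$) to $\bV^{\obv(i)}_\infty$. For fixed $\bx$, the map $\bw\mapsto H(\bx,\bw)$ is trivially continuous on a discrete space. By \eqref{int1} it is bounded by $\lVert\obv(i;k)\rVert_\infty=1$. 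Weak convergence then gives $\E_{\obv(i;k)}[H(\bx,\bV_r)]\to\E[H(\bx,\bV^{\obv(i)}_\infty)]=h^{(i)}(\bx)$. The right-hand side does not involve $k$, so the limit is independent of $k$.

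\emph{Step 2: continuity and boundary values.} Since $\mu^{\obv(i)}$ is supported on $\cS_{\cP}(\obv(i))$, every $H(\cdot,\bw)$ in the support is a polynomial bounded by $1$ on $\Delta_d$, and dominated convergence gives continuity of $h^{(i)}$. By Lemma~\ref{lem:boundary_i_nomutation}(3), $H(\bx,\bw)=0$ for $\bx\in\partial_i\Delta_d$ and all $\bw\in\cS_{\cP}(\obv(i))$, so $h^{(i)}=0$ there. Lemma~\ref{lem:boundary_i_nomutation}, its first claim together with (4), gives $H(\be{i},\bw)=\bw(\dim(\bw)\be{i})=1$, so $h^{(i)}(\be{i})=1$.

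\emph{Step 3: strict bounds in $(0,1)$.} This is the main obstacle. I would pass to the primal process via duality. Theorem~\ref{thm:Bernsteinduality} and Step 1 give $h^{(i)}(\bx)=\lim_r\E_\bx[H(\bX_r,\obv(i;k))]=\lim_r\E_\bx[\bX_r(i)^k]$ for every $k$. From this I first get $h^{(i)}\in[0,1]$. Moreover $\sum_j h^{(j)}(\bx)=\lim_r\E_\bx[\sum_j\bX_r(j)^k]$. Letting $k\to\infty$ along a diagonal, I would show that $\bX_r$ concentrates near the vertices.

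For the strict inequalities I plan to use the structure of $\bV$. For $\bx$ not in $\partial_i\Delta_d\cup\{\be{i}\}$, the aim is $0<h^{(i)}(\bx)$ and $h^{(i)}(\bx)<1$. For positivity, it suffices that under $\mu^{\obv(i)}$ the positive atom $\bV=\obv(i)$ has positive mass, since there $H(\bx,\obv(i))=\bx(i)>0$. It then remains to show that all $\bw$ in the support have $H(\bx,\bw)\ge 0$. I would try to prove this nonnegativity by induction over operators: $\bw\ge 0$ coordinatewise, because $C$ and $D^{n,m}$ with $m\ge2$ are positive operators, as they are averages with nonnegative weights. Starting from $\obv(i)\ge0$, all coefficients are in $[0,1]$.

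For $h^{(i)}(\bx)<1$, I would use $\sum_j h^{(j)}\le 1$, which follows from $\sum_j\bX_r(j)^k\le 1$. Combined with $h^{(j)}(\bx)>0$ for some $j\ne i$ with $\bx(j)>0$, this gives $h^{(i)}(\bx)<1$. Such a $j$ exists because $\bx\ne\be{i}$.
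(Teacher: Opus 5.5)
Your proof is correct. Steps 1 and 2 and the lower bound in Step 3 follow the paper's argument: weak convergence on the discrete space together with the bound \eqref{int1}, dominated convergence, Lemma~\ref{lem:boundary_i_nomutation}, and the positive atom $\mu^{\obv(i)}(\{\obv(i)\})>0$ combined with nonnegativity of every $\bw\in\cS_{\cP}(\obv(i))$. You leave the positivity of that atom as ``it suffices that''. It follows directly from the support statement in Proposition~\ref{prop:asympV}(1), which is how the paper obtains it.

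The only genuine difference is the strict upper bound $h^{(i)}(\bx)<1$. You go back to the primal process via Theorem~\ref{thm:Bernsteinduality} to get $\sum_j h^{(j)}(\bx)\le 1$ (in fact $=1$, taking $k=1$). You then use $h^{(j)}(\bx)>0$ for some $j\neq i$ with $\bx(j)>0$.

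The paper stays on the dual side. The coefficients of every $\bw\in\cS_{\cP}(\obv(i))$ lie in $[0,1]$, so $H(\bx,\bw)\in[0,1]$. In the decomposition $h^{(i)}(\bx)=\bx(i)\,\mu^{\obv(i)}(\{\obv(i)\})+\sum_{\bw\neq\obv(i)}H(\bx,\bw)\,\mu^{\obv(i)}(\{\bw\})$, the atom contributes $\bx(i)<1$ with positive weight and every other term is at most $1$. This gives the strict bound at once.

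You had already observed that all coefficients lie in $[0,1]$, so this shorter route was available to you. Your route costs an appeal to duality, but it yields the identity $\sum_j h^{(j)}=1$, which the paper needs later in the proof of Theorem~\ref{fixext} anyway. The remark about letting $k\to\infty$ to show concentration near the vertices plays no role in this lemma and can be dropped.
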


\begin{proof}
The proof adapts~\cite[Proof of Lem.~5.2]{Cordero_2026} to the multi-dimensional setting.
By Proposition~\ref{prop:asympV}, if $\bV_0=\obv(i;k)$, $\bV_r$ converges in distribution to $\bV^{\obv(i)}_{\infty}$ for any $k\in \N$.
By \eqref{int1} we have that, for any $(\bx,\bv) \in \Delta_d \times \cS_{\cP}(\obv(i;k))$, $\lvert H(\bx,\bv)\rvert \leq 1$.
By the continuous mapping theorem we get \eqref{feqlimhxvt}.
Since $\bV^{\obv(i)}_{\infty}$ is supported on $\cS_{\cP}(\obv(i))$ by Proposition~\ref{prop:asympV}, the mapping $\bx \mapsto H(\bx,\bv)$ is continuous for any $\bv \in \cS_{\cP}(\obv(i))$, so that by dominated convergence we get that also $h^{(i)}$ is continuous.

For $\by\in \Delta_d$ with $\by(i)=0$, we have $h^{(i)}(\by)=0$ by Lemma~\ref{lem:boundary_i_nomutation}-(3); and by Lemma~\ref{lem:boundary_i_nomutation}-(4): $h^{(i)}(\be{i})=1$.
We now justify that $h^{(i)}(\bx) \in (0,1)$ for $\bx\in \Delta_d\setminus(\partial_i\Delta_d\cup\{\be{i}\})$.
First note that for such $\bx$, we have $\bx(i)\in (0,1)$; and thus $H(\bx,\obv(i))=\bx(i)\in (0,1)$.
Moreover, by Proposition \ref{prop:asympV}, we have that $\obv(i)$ is in the support of $\mu^{\obv(i)}$, i.e. $\P(\bV^{\obv(i)}_{\infty}=\obv(i))>0$.
Thus, 
\begin{align}
h^{(i)}(\bx) = \bx(i) \, \P(\bV^{\obv(i)}_{\infty}=\obv(i)) + \sum_{\bv \in \cS_{\cP}(\obv(i)) \setminus \{\obv(i)\}} H(\bx,\bv) \, \P(\bV^{\obv(i)}_{\infty}=\bv). \label{combcvx}
\end{align}
For any $(\bx,\bv )\in \Delta_d \times \cS_{\cP}(\obv(i))$ we have $H(\bx,\bv)\in [0,1]$, because of Lemma~\ref{lem:actionV} and because whenever $\bM\ni\bw\geq 0$, then $C^{n,k}(\bw)\geq 0$ and $D^{n,m}(\bw)\geq 0$.
Since $\sum_{\bv \in \cS_{\cP}(\obv(i))} \P(\bV^{\obv(i)}_{\infty}=\bv)=1$ and $\P(\bV^{\obv(i)}_{\infty}=\obv(i))>0$, we get from \eqref{combcvx} that $h^{(i)}(\bx) \in (0,1)$ for $\bx$ with $\bx(i) \in (0,1)$. 
\end{proof}

\begin{proof}[Proof of Theorem \ref{fixext}]
The proof adapts~\cite[Proof of Thm.~2.4]{Cordero_2026} to the multi-dimensional setting.
Let $h^{(i)}$ be as in Lemma~\ref{lemf}.
Assume $\bX_0=\bx$ for some $\bx\in\mathrm{int}(\Delta_d)$.
First, we prove that $(h^{(i)}(\bX_t))_{t\geq 0}$ is a bounded martingale.
To this end, we use the Markov property of $\bX$, Lemma~\ref{lemf}, dominated convergence (which can be used since, by~\eqref{int1}, we have $\lvert \E_{\obv(i)}[H(\tilde \bX_s,\bV_r)]\rvert\leq \lVert \obv(i)\rVert_\infty=1$), Fubini's theorem, Bernstein duality (Thm.~\ref{thm:Bernsteinduality}), and the Markov property of $\bV$, to obtain 
	\begin{align*}
		\E_{\bx}&[h^{(i)}(\bX_{t+s})\mid \mathcal{F}_t]= \E_{\bX_t}[h^{(i)}(\tilde \bX_{s})]=\E_{\bX_t}[ \lim_{r\to\infty} \E_{\obv(i)}[H(\tilde \bX_s,\bV_r)]]\\
		& =\lim_{r\to\infty} \E_{\bX_t}[ \E_{\obv(i)}[H(\tilde \bX_s,\bV_r)]] = \lim_{r\to\infty}\E_{\obv(i)}[\E_{\bX_t}[H(\tilde \bX_s,\bV_r)]]\\
		&= \lim_{r\to\infty}\E_{\obv(i)}[\E_{\bV_r}[H(\bX_t,\tilde \bV_s)]]=\lim_{r\to\infty} \E_{\obv(i)}[H(\bX_t,\tilde \bV_{r+s})]=h^{(i)}(\bX_t).
	\end{align*}
By Doob's martingale convergence theorem, $F\coloneqq\lim_{t\to\infty} h^{(i)}(\bX_t)$ exists almost surely.
Using Bernstein duality (Thm.~\ref{thm:Bernsteinduality}) and Lemma~\ref{lemf} we get $\E_{\bx}[\bX_t^k(i)]=\E_{\bx}[H(\bX_t,\obv(i;k))]=\E_{\obv(i;k)}[ H(\bx,\bV_t)] \to h^{(i)}(\bx)$ as $t\to\infty$. 

Therefore the positive integer moments of $\bX_t(i)$ converge to the moments of the distribution $\berno{h^{(i)}(\bx)}$.
Probability distributions on $[0,1]$ are completely determined by their positive integer moments, and convergence of those moments imply convergence in distribution.
We thus get that the law of $\bX_t(i)$ converges weakly to $\berno{h^{(i)}(\bx)}$.
Since $\bX_t(i)\to\berno{h^{(i)}(\bx)}$ in law, 
$\min\{\bX_t(i),1-\bX_t(i)\}\to0$ in probability.
By Lemma~\ref{lemf}, 
$h^{(i)}$ is uniformly continuous on $\Delta_d$ with $h^{(i)}=0$ on $\partial_i\Delta_d$ and $h^{(i)}(\be{i})=1$.
Hence $h^{(i)}(\bX_t)-\ind{\bX_t(i)>1/2}\to0$ in probability,
and the law of $h^{(i)}(\bX_t)$ converges weakly to $\berno{h^{(i)}(\bx)}$.
Combining with the above almost sure convergence of $h^{(i)}(\bX_t)$, we get $F \sim \berno{h^{(i)}(\bx)}$; 
so in particular $F \in \{0,1\}$ a.s.
Let us show that $\bX_t(i)$ converges to $F$ almost surely.
Let $\varepsilon \in (0,1/2)\cap \mathbb{Q}$.
Since, by Lemma~\ref{lemf}, $h^{(i)}$ is continuous, positive on $\Delta_d\setminus(\partial_i\Delta_d)$, and strictly smaller than $1$ on $\Delta_d\setminus\{\be{i}\}$,
we have 
\begin{equation*}
	\delta(\varepsilon)\coloneqq\min \{ \min_{\substack{\bz \in \Delta_d \\ \bz(i)\in [\varepsilon,1]}}h^{(i)}(\bz),1-\hspace{-0.4cm}\max_{
	\substack{\bz \in \Delta_d \\ \bz(i)\in [0,1-\varepsilon]}}h^{(i)}(\bz) \}>0.
\end{equation*}
Since $h^{(i)}(\bX_t)$ converges to $F$ almost surely, there exists almost surely $T(\varepsilon)$ such that $\lvert h^{(i)}(\bX_t)-F\rvert<\delta(\varepsilon)$ for all $t \geq T(\varepsilon)$.
Distinguishing the cases $F=0$ and $F=1$ we see that this is possible only if $\lvert \bX_t(i)-F\rvert <\varepsilon$ for all $t \geq T(\varepsilon)$.
In conclusion, $\P(\lvert \bX_t(i)-F\rvert<\varepsilon \ \text{for all large }t)=1$.
Since this is true for all $\varepsilon \in (0,1/2)\cap \mathbb{Q}$ we get that $\bX_t(i)$ converges almost surely to $F$.
This proves that $\bX_\infty(i)=F$.
Repeating the argument for every $i\in \llbracket d+1\rrbracket$, and noting that using the duality we have
\begin{equation*}
    \sum_{i=1}^{d+1}h^{(i)}(\bx)=\lim_{r\to\infty}\sum_{i=1}^{d+1}\E_{\bx}[H(\bX_r,\obv(i))]=\lim_{r\to\infty}\sum_{i=1}^{d+1}\E_{\bx}[\bX_r(i)]=1,
\end{equation*}
yields $(\bX_r(1),\ldots \bX_r(d))\xrightarrow{r\to\infty}\bX_\infty$ almost surely with $\bX_{\infty}$ taking values in $\{\be{i}\in \Delta_d: i\in \llbracket d+1\rrbracket \}$, and each such value is taken with a positive probability.
Finally, \eqref{exprfivproba} follows from $F \sim \berno{h^{(i)}(\bx)}$ and Lemma~\ref{lemf}. 
\end{proof}

\subsection{$\bb(1)>0$: Proof of Theorem~\ref{thstadist}}\label{sec:proofstationary}
In this section, we study the ergodicity of~$\bX$ and characterise its stationary distribution, both under the assumption that~$\bb(1)>0$, $P$ is irreducible, and 
either~$\lvert \btheta\rvert>0$, which is~\eqref{cond:stationary_parent_independent}, 
or $\lvert \btheta\rvert=0$, $\kappa=1$, and $P$ being in addition aperiodic, which is~\eqref{cond:stationary_true_parent_dependent}. 

\begin{proof}[Proof of Proposition~\ref{prop:absV}]
    We first prove the result under~\eqref{cond:stationary_parent_independent}.
    To this end, we can adapt~\cite[Proof of Thm.~2.6]{Cordero2021} to the multi-dimensional setting.
    In this case, by Lemma~\ref{lem:L_rec_abs}-(2), $L$ absorbs in finite time in~$0$.
    Let $\tau_0\coloneqq \inf\{t\geq 0:\, L_t=0\}.$
    For $r\geq \tau_0$, there are no transitions of~$\bV$ and thus $\bV_r=\bV_{\tau_0}\in \bM(0)$ for all $r\geq \tau_0$.
    In this case set $U_{\infty}\coloneqq\bV_{\tau_0}(0,\ldots,0)$.
    In particular, $\min_{\Pi(L_r)}\bV_r(\balpha)=\max_{\Pi(L_r)}\bV_r(\balpha)=U_{\infty}$ for all $r\geq \tau_0$, $\P_{\bv}$-almost surely.
    This completes the proof of the proposition under~\eqref{cond:stationary_parent_independent}.
    
    Next, assume~\eqref{cond:stationary_true_parent_dependent}.
    In this case, by Lemma~\ref{lem:L_rec_abs}-(1), $1$ is positive recurrent for~$L$.
    In fact, it follows from the definition of~$\bV$ that because~$\kappa=1$, the state $1$ is absorbing for~$L$.
    Let $\tau_1\coloneqq \inf\{t\geq 0:\, L_t=1\}$.
    For $r\geq \tau_1$, $\bV_r\in \bM(1)$.
    Note that $\bM(1)$ is isomorphic to $\R^{d+1}$ via the map $\bv\mapsto (\bv(\be{1}),\ldots, \bv(\be{d+1}))\eqqcolon \vec{\bv}$ (interpreted as a row vector).    
    Applying $D^{1,1}$ to $\bv$ corresponds to multiplying $\vec{\bv}$ from the right with the matrix~$Q\coloneqq P^{\top}$ (recall $\tilde P=P$ since $\lvert\btheta\rvert=0$), whose columns sum to~$1$.
    In particular, $\lim_{r\to\infty}\bV_{\tau_1+r}=\lim_{n\to\infty} \vec{\bV}_{\tau_1}Q^{n}$, 
    with $\vec{\bV}_{\tau_1}$ being the (random) vector that arises the first time~$L$ hits~$1$.
    $Q$ is primitive (since $P$ is) and has, 
    by Perron--Frobenius theory, left eigenvector $(1,\ldots,1)$ and positive right eigenvector~$\psi$, normalised such that $\lvert \psi \rvert =1$. 
    That is, $\psi^\top$ is the stationary distribution of~$P$.
    By~\cite[Thm.~1.2]{Seneta2006}, $Q^n\xrightarrow{n\to\infty}\psi\cdot(1,\ldots,1)$.
    For later reference, we note that this convergence is exponential.
    In particular, $\lim_{t\to\infty}\bV_t = \vec\bV_{\tau_1}\cdot\psi\cdot(1,\ldots,1)$ and $U_{\infty}=\vec\bV_{\tau_1}\cdot\psi$.
    This completes the proof under~\eqref{cond:stationary_true_parent_dependent}.
\end{proof}

\begin{proof}[Proof of Theorem \ref{thstadist}]
We start out by proving that all mixed integer moments of $\bX_t$ converge under the stated assumptions to a limit independently of the initial condition~$\bx$ of $\bX$.
Fix some exponent $\balpha\in \Pi(n)$ for some $n\in \N_0$. 
First, consider the case~\eqref{cond:stationary_parent_independent}.
Let again $\tau_0 \coloneqq \inf \{ r \geq 0:\, L_r=0 \}$ and recall also from the previous proof that $\bV_{\tau_0}(0,\ldots,0)=U_{\infty}$.
Moreover, 
by the Bernstein duality (Thm.~\ref{thm:Bernsteinduality}) and~\eqref{int1},
\begin{align}
\lvert  \E_{\bx}[\bX_t^{\balpha}] - \E_{\balpha}[ U_\infty] \rvert \leq &\lvert \E_{\balpha}[ H(\bx,\bV_t) - H(\bx,\bV_{\tau_0}) ] \rvert \nonumber \\
\leq  &2 \, \P_{\balpha} (\tau_0 > t) \xrightarrow[]{t \rightarrow \infty} 0.\label{eq:bound_tau0} 
\end{align}
Next, assume~\eqref{cond:stationary_true_parent_dependent}.
Recall the notation from the previous proof.
In particular, let again $\tau_1 \coloneqq \inf \{ r \geq 0:\, L_r=1 \}$
and recall that for every~$j\in \llbracket d+1\rrbracket$, 
\[\lvert\bV_{\tau_1+r}(\be{j}) - (\bV_{\tau_1}(\be{k}):\, k\in \llbracket d+1 \rrbracket ) \cdot \psi\rvert \xrightarrow{r\to\infty}0,\] 
exponentially,
where $\psi$ is the normalised right eigenvector of~$Q$ from the previous proof.
Using again the Bernstein duality (Thm.~\ref{thm:Bernsteinduality}) and~\eqref{int1}, but also the just-mentioned exponential convergence and Lemma~\ref{lem:L_rec_abs},
\begin{align}
&\lvert  \E_{\bx}[\bX_t^{\balpha}] - \E_{\balpha}[ U_\infty] \rvert \nonumber \\
\leq\ & 2\P_{\balpha}(t< \tau_1)+\E_{\balpha}\Big[ \max_{j\in \llbracket d+1\rrbracket} \lvert\bV_{t}(\be{j}) - U_\infty\rvert\,\ind{t\geq \tau_1}\Big] \xrightarrow[]{t \rightarrow \infty} 0. \label{eq:bound_tau1}
\end{align}
Thus we have shown that in both cases, any mixed integer moment of $\bX_t$ converges to a limit (which does not depend on the initial condition $\bx$) as $t\to\infty$, that is,  
\begin{equation}
    \forall{\bx\in \Delta_d}:\quad \lim_{t\to\infty}\E_{\bx}[\bX_t^{\balpha}] = \E_{\balpha}[U_{\infty}].
    \label{eq:limit_moments}
\end{equation}
Using the Stone--Weierstrass theorem, one can show that any probability distribution on a compact set is uniquely determined by its mixed moments.
In particular, there is a probability distribution $\mathcal{L}$ on $\Delta_d$ such that for any $\bx\in\Delta_d$, the law of $\bX_t$ under $\P_{\bx}$ converges in distribution to $\mathcal{L}$ as $t\to\infty$, and the mixed moments of this law are given by~\eqref{momentsloiinvar}. 
By dominated convergence, the convergence of the law of $\bX_t$ towards $\mathcal{L}$ as $t\to\infty$ extends to any distribution on $\Delta_d$ for $\bX_0$.
Hence, $\mathcal{L}$ is the unique stationary distribution of~$\bX$.

It remains to prove that for all $i$, $\P_{\bx}(\bX_t\xrightarrow{t\to\infty}\be{i})=0$.
Since $\bX_t\in \Delta_d$, $\P_{\bx}(\lim_{t\to\infty}\bX_t=\be{i})=0\Leftrightarrow\P_{\bx}(\lim_{t\to\infty} \bX_t(i)=1)=0$.
    We first show that $\E_{\obv(j)}[U_\infty]>0$ for every $j\in\llbracket d+1\rrbracket$.
    Under~\eqref{cond:stationary_true_parent_dependent}, starting from $\obv(j)$ we have $L_0=1$, hence $\tau_1=0$ and $U_\infty=\psi(j)>0$ by the proof of Proposition~\ref{prop:absV}.
    Under~\eqref{cond:stationary_parent_independent}, let $I_{\theta}\coloneqq\{k\in\llbracket d+1\rrbracket:\btheta(k)>0\}$.
    By irreducibility of~$P$, there are $n\in\N_0$ and $k\in I_\theta$ with $\tilde P^n_{k,j}>0$ (take a shortest path in~$P$ from $I_\theta$ to~$j$; its edges after the first vertex lead to states outside~$I_\theta$, on which $\tilde P$ and $P$ agree).
    Under $\P_{\obv(j)}$, with positive probability, the first $n$ transitions of $\bV$ are of type $D^{1,1}$ and the $(n+1)$st is of type~$D^{1,0}$; on this event
    $U_\infty=\sum_{k}\frac{\btheta(k)}{\lvert\btheta\rvert}\big((\tilde P/(1-\lvert\btheta\rvert))^n\big)_{k,j}>0$.
    Since $U_\infty\geq0$ whenever $\bV_0\geq0$, we get $\E_{\obv(j)}[U_\infty]>0$.
    Moreover, by~\eqref{eq:limit_moments} with $\balpha=\be{j}$ (note that $\bv^{\be{j}}=\obv(j)$), we have $\sum_{j=1}^{d+1}\E_{\obv(j)}[U_\infty]=\lim_{t\to\infty}\E_{\bx}[\sum_{j=1}^{d+1}\bX_t(j)]=1$.
    Since each summand is positive, 
    it follows that $\E_{\obv(j)}[U_\infty]\in(0,1)$.
    Finally, we fix~$i$ and consider~$\bV$ started from $\obv(i)$.
    Choose $\varepsilon>0$ such that $(1+\varepsilon)\E_{\obv(i)}[U_\infty]<1$.
    The right-hand sides of~\eqref{eq:bound_tau0} and~\eqref{eq:bound_tau1} do not depend on~$\bx$, 
    so the convergence in~\eqref{eq:limit_moments} is uniform in~$\bx$.
    Hence there is $t_0>0$ with $\E_{\bx}[\bX_{t_0}(i)]<(1+\varepsilon)^{1/2}\E_{\obv(i)}[U_\infty]$ for all $\bx\in\Delta_d$, and by Markov's inequality
    $p\coloneqq\sup_{\bx\in\Delta_d}\P_{\bx}\big(\bX_{t_0}(i)>(1+\varepsilon)\E_{\obv(i)}[U_\infty]\big)\leq(1+\varepsilon)^{-1/2}<1$.
    Set $c\coloneqq(1+\varepsilon)\E_{\obv(i)}[U_\infty]<1$.
    By the Markov property at the times $nt_0$ and induction on~$\ell$,
    $\P_{\bx}\big(\bX_{nt_0}(i)>c \text{ for } n=m+1,\ldots,m+\ell\big)\leq p^{\ell}$ for all $m,\ell\in\N_0$.
    Letting $\ell\to\infty$ shows that, for every~$m$, the event $\{\bX_{nt_0}(i)>c \text{ for all } n\geq m\}$ has probability~$0$.
    Hence, almost surely, $\bX_{nt_0}(i)\leq c<1$ for infinitely many~$n$, and so $\P_{\bx}(\bX_t(i)\xrightarrow{t\to\infty}1)=0$.
    This completes the proof.
\end{proof}
\appendix
\section{Establishing the generator identity: proof of Lemma~\ref{lem:generator_identity}.}\label{sec:generator_identity}

\begin{proof}[Proof of Lemma~\ref{lem:generator_identity}]
    We establish~\eqref{eq:generatorrelation} for each component of the generator separately.
    Recall the definition of the $\Lambda$-Wright--Fisher process generator from~\eqref{eq:wfgenerator}.
    The generator of the Bernstein coefficient process, $\cB$, acts on functions $g:\cS_{\cP}(\bv)\to\R$ via
    ${\cB}g(\bw)=({\cB}_{\bd}+{\cB}_{\Lambda_{>}}+{\cB}_{\Lambda(0)})g(\bw),$
    where for $\bw\in \cS_{\cP}(\bv)\cap\bM(n)$,
	\begin{equation}\label{eq:dual_generator}
    \begin{split}
        {\cB}_{\bd}g(\bw)&\coloneqq n\lvert \btheta\rvert\bb(1) [g(D^{n,0}(\bw))-g(\bw)] \\
        & \qquad + n\sum_{m=1}^{\kappa}\bb(m)(1- \ind{m=1}\lvert \btheta\rvert )[g(D^{n,m}(\bw))-g(\bw)],\\
        {\cB}_{\Lambda_{>}}g(\bw)&\coloneqq \sum_{k=2}^n \binom{n}{k} \lambda_{n,k}^{>} [g({C}^{n,k}(\bw))-g(\bw)]\\
        {\cB}_{\Lambda(0)}g(\bw)&\coloneqq  \Lambda(\{0\})\binom{n}{2} [g({C}^{n,2}(\bw))-g(\bw)],
    \end{split}
	\end{equation}
    where 
    $\lambda^{>}_{n,k}\coloneqq\int_{(0,1]}z^{k-2}(1-z)^{n-k}\Lambda(\dd z)$, so that 
    $\lambda_{n,k}=\lambda^{>}_{n,k}+\Lambda(\{0\})\ind{k=2}$.
    
    We start by proving $\cA_{\bd}H(\cdot,\bw)(\bx)={\cB}_{\bd}H(\bx,\cdot)(\bw)$: 
    {\footnotesize
	\begin{align*}
    &\cA_{\bd}H(\cdot,\bw)(\bx) \\
    &= \sum_{m=1}^{\kappa} \bb(m) \sum_{i=1}^{d}\left\{ \left[-\bx(i) + \sum_{\substack{\bbeta\in \N_0^{d+1}:\\ \lvert \bbeta \rvert = m}} \bp(\bbeta;i) \binom{m}{\bbeta} \bx^{\bbeta}
    \right] \right.\\
    &\left. \left[\sum_{\substack{\balpha\in \N_0^{d+1}: \lvert \balpha \rvert = n \\ \balpha(i)\geq 1}}\bw(\balpha) \binom{n}{\balpha}\balpha(i)\bx^{\balpha-\be{i}}-\sum_{\substack{\balpha\in \N_0^{d+1}: \lvert \balpha \rvert = n \\ \balpha(d+1)\geq 1}}\bw(\balpha) \binom{n}{\balpha}\balpha(d+1)\bx^{\balpha-\be{d+1}}\right]\right\} \\
    &= \sum_{m=1}^{\kappa} \bb(m) \left\{\sum_{\substack{\bbeta\in \N_0^{d+1}:\\ \lvert \bbeta \rvert = m}}  \left[\sum_{i=1}^{d}\left(\sum_{\substack{\balpha\in \N_0^{d+1}: \lvert \balpha \rvert = n \\ \balpha(i)\geq 1}}  \bp(\bbeta;i) \bw(\balpha) \binom{m}{\bbeta}\binom{n}{\balpha}\balpha(i) \bx^{\balpha+\bbeta-\be{i}}\right)\right.\right. \\
    &\left.\left.+ \sum_{\substack{\balpha\in \N_0^{d+1}: \lvert \balpha \rvert = n \\ \balpha(d+1)\geq 1}}  (\bp(\bbeta;d+1)-1) \bw(\balpha) \binom{m}{\bbeta}\binom{n}{\balpha}\balpha(d+1) \bx^{\balpha+\bbeta-\be{d+1}}
    \right]  - \sum_{\balpha\in\Pi(n)}\bw(\balpha)\binom{n}{\balpha}\big(n-\balpha(d+1)\big)\bx^{\balpha} \right. \\
    & \left. +\sum_{\substack{\balpha\in \N_0^{d+1}: \lvert \balpha \rvert = n \\ \balpha(d+1)\geq 1}} \bw(\balpha) \binom{n}{\balpha} \balpha(d+1) \bx^{\balpha-\be{d+1}} (1-\bx(d+1))\right\}\\
    &= \sum_{m=1}^{\kappa}\bb(m)  \left\{\sum_{\substack{\bbeta\in \N_0^{d+1}:\\ \lvert \bbeta \rvert = m}}   \sum_{i=1}^{d+1}\left(\sum_{\substack{\balpha\in \N_0^{d+1}: \\\lvert \balpha \rvert = n-1}}  \bp(\bbeta;i) \bw(\balpha+\be{i}) \binom{m}{\bbeta}\binom{n}{\balpha+\be{i}}(\balpha(i)+1) \bx^{\balpha+\bbeta}\right) -n H(\bx,\bw) \right\} 
\end{align*}
}
where in the second step we used that $ \bp (\bbeta; d+1) = 1-\sum_{i=1}^{d} \bp(\bbeta;i)$ and $\bx(d+1) = 1-\sum_{i=1}^d \bx(i).$
For $m\in\llbracket 2,\kappa\rrbracket$,
the $m$th summand can be rewritten as
$$n \bb(m) \left\{\sum_{\substack{\balpha\in \N_0^{d+1}: \\\lvert \balpha \rvert = n+m-1}} \binom{n+m-1}{\balpha}\bx^{\balpha} \sum_{\substack{\bbeta\in \N_0^{d+1}: \lvert \bbeta \rvert = m \\ \bbeta\leq \balpha}} \frac{\prod_{k=1}^{d+1}\binom{\balpha(k)}{\bbeta(k)}}{\binom{n+m-1}{m}}  \sum_{i=1}^{d+1} \bp(\bbeta;i) \bw(\balpha-\bbeta+\be{i})  - H(\bx,\bw) \right\},$$
which leads to the form required in the definition of $D^{n,m}$.
For $m=1$ and $\lvert \btheta\rvert\notin\{0,1\}$, 
the summand can be rewritten as 
{\footnotesize
\begin{align*}
    &n\bb(1) \left(\sum_{j=1}^{d+1}   \sum_{i=1}^{d+1}\sum_{\substack{\balpha\in \N_0^{d+1}: \\\lvert \balpha \rvert = n-1}}  \bp(\be{j};i) \bw(\balpha+\be{i}) \binom{n-1}{\balpha}\bx^{\balpha+\be{j}}\right) -n\bb(1) H(\bx,\bw)\\
    &=(1-\lvert \btheta\rvert) \ n\bb(1)\left(\sum_{\substack{\balpha\in \N_0^{d+1}: \\\lvert \balpha \rvert = n}} \binom{n}{\balpha} \bx^{\balpha}\sum_{j=1}^{d+1}   \sum_{i=1}^{d+1}  \frac{\tilde{\bp}(\be{j};i)}{1-\lvert \btheta\rvert} \frac{\balpha(j)}{n}  \bw(\balpha+\be{i}-\be{j})\right) \\
    & + \lvert \btheta\rvert n\bb(1)\left(\sum_{\substack{\balpha\in \N_0^{d+1}: \\\lvert \balpha \rvert = n-1}}  \binom{n-1}{\balpha}\bx^{\balpha}\sum_{i=1}^{d+1} \frac{\btheta(i)}{\lvert \btheta\rvert} \bw(\balpha+\be{i}) \right) -n \bb(1)H(\bx,\bw)\\
    &=n(1-\lvert \btheta\rvert) \ \bb(1) [H(\bx,D^{n,1}(\bw))-H(\bx,\bw)] + n \lvert \btheta\rvert \ \bb(1) [H(\bx,D^{n,0}(\bw))-H(\bx,\bw)].
\end{align*}
}
It is not difficult to check that also for $\lvert \btheta\rvert\in\{0,1\}$ the above equation holds.
Altogether this proves $\cA_{\bd}H(\cdot,\bw)(\bx)={\cB}_{\bd}H(\bx,\cdot)(\bw)$.

\medskip

Next, we establish $\cA_{\Lambda_>}H(\cdot,\bw)(\bx)={\cB}_{\Lambda_>}H(\bx,\cdot)(\bw)$:
{\footnotesize
\begin{align*}
    &\cA_{\Lambda_>}H(\cdot,\bw)(\bx) \\
    &= \int_{(0,1]} \sum_{i=1}^{d+1}\bx(i)\sum_{\substack{\balpha\in \N_0^{d+1}: \\ \lvert \balpha \rvert = n}}\bw(\balpha)\binom{n}{\balpha}\left[ ((1-r)\bx(i)+r)^{\balpha(i)} \prod_{\substack{j\in\llbracket d+1\rrbracket: \\ j\neq i}} (\bx(j)(1-r))^{\balpha(j)} - \bx^{\balpha} \right]\frac{\Lambda(\dd r)}{r^2}\\
    &= \int_{(0,1]} \left[\sum_{\substack{\balpha\in \N_0^{d+1}: \\ \lvert \balpha \rvert = n}}
    \sum_{i=1}^{d+1}
    \sum_{k=0}^{\balpha(i)}\binom{\balpha(i)}{k}r^k(1-r)^{n-k}\bw(\balpha)\binom{n}{\balpha} \bx^{\balpha-(k-1)\be{i}}
    - H(\bx,\bw) \right]\frac{\Lambda(\dd r)}{r^2}\\
    &= \int_{(0,1]} \left[\sum_{\substack{\balpha\in \N_0^{d+1}: \\ \lvert \balpha \rvert = n}}
    \sum_{i=1}^{d+1}
    \sum_{k=2}^{\balpha(i)}\binom{\balpha(i)}{k}r^k(1-r)^{n-k}\bw(\balpha)\binom{n}{\balpha} \bx^{\balpha-(k-1)\be{i}}
    - (1-(1-r)^n-nr(1-r)^{n-1}) H(\bx,\bw) \right]\frac{\Lambda(\dd r)}{r^2}\\
    &= \sum_{k=2}^{n} \binom{n}{k} \lambda_{n,k}^{>} \left[\sum_{i=1}^{d+1} \sum_{\substack{\balpha\in \N_0^{d+1}: \lvert \balpha \rvert = n \\ \balpha(i)\geq k}} \frac{\binom{\balpha(i)}{k} \binom{n}{\balpha}}{\binom{n}{k}}\bw(\balpha)\bx^{\balpha-(k-1)\be{i}}-H(\bx,\bw)\right]\\
    &= \sum_{k=2}^{n} \binom{n}{k} \lambda_{n,k}^{>}
    \left[\sum_{\substack{\balpha\in \N_0^{d+1}: \lvert \balpha \rvert = n-k+1}} \binom{n-k+1}{\balpha} \bx^{\balpha} 
    \sum_{i=1}^{d+1}\bw(\balpha+(k-1)\be{i}) \frac{\binom{n}{\balpha+(k-1)\be{i}} \binom{\balpha(i)+k-1}{k}}{\binom{n}{k} \binom{n-k+1}{\balpha}} -H(\bx,\bw)\right]\\
    &={\cB}_{\Lambda_>}H(\bx,\cdot)(\bw),
\end{align*}
}
where in the fourth step we interchanged sum and integral,
which is justified since all sums are finite and 
$r^{k-2}(1-r)^{n-k}$ is bounded on $(0,1]$ for $k\geq2$.

Finally, we establish $\cA_{\Lambda(0)}H(\cdot,\bw)(\bx)={\cB}_{\Lambda(0)}H(\bx,\cdot)(\bw)$.
Without loss of generality, we assume in the following that $\Lambda(\{0\})=1$, and we compute $2\cA_{\Lambda(0)}H$, i.e.\ we omit the factor $1/2$ in~\eqref{eq:wfterm}.
{\footnotesize
\begin{align*}
    &2\cA_{\Lambda(0)}H(\cdot,\bw)(\bx) \\
    &= \sum_{i=1}^d \left\{ \sum_{\substack{\balpha\in \N_0^{d+1}: \lvert \balpha \rvert = n \\ \balpha(i)\geq 1}} \bw(\balpha)\binom{n}{\balpha} \balpha(i)(\balpha(i)-1) (1-\bx(i))\bx^{\balpha-\be{i}} \right.\\
    &\left.- 2\sum_{\substack{\balpha\in \N_0^{d+1}: \lvert \balpha \rvert = n \\ \balpha(d+1)\geq 1}} \bw(\balpha)\binom{n}{\balpha} \balpha(i)\balpha(d+1) (1-\bx(i))\bx^{\balpha-\be{d+1}}\right.\\
    &\left.+ \sum_{\substack{\balpha\in \N_0^{d+1}: \lvert \balpha \rvert = n \\ \balpha(d+1)\geq 2}} \bw(\balpha)\binom{n}{\balpha} \balpha(d+1)(\balpha(d+1)-1) (1-\bx(i))\bx^{\balpha+\be{i}-2\be{d+1}} \right. \\
    &+ \sum_{\substack{j=1 \\ j\neq i}}^d\left[-\sum_{\substack{\balpha\in \N_0^{d+1}: \\ \lvert \balpha \rvert = n}} \bw(\balpha)\binom{n}{\balpha}\balpha(i)\balpha(j) \bx^{\balpha} 
    +2\sum_{\substack{\balpha\in \N_0^{d+1}: \lvert \balpha \rvert = n \\ \balpha(d+1)\geq 1}}\bw(\balpha)\binom{n}{\balpha} \balpha(j)\balpha(d+1)\bx^{\balpha+\be{i}-\be{d+1}}\right.\\
    &\left. \left. - \sum_{\substack{\balpha\in \N_0^{d+1}: \lvert \balpha \rvert = n \\ \balpha(d+1)\geq 2}}\bw(\balpha)\binom{n}{\balpha}\balpha(d+1)(\balpha(d+1)-1) \bx^{\balpha-2\be{d+1}+\be{i}+\be{j}} \right]\right\} \\
    &= \sum_{i=1}^d \left\{ \sum_{\substack{\balpha\in \N_0^{d+1}: \lvert \balpha \rvert = n \\ \balpha(i)\geq 1}} \bw(\balpha)\binom{n}{\balpha} \balpha(i)(\balpha(i)-1) (1-\bx(i))\bx^{\balpha-\be{i}} \right.\\
    &\left.- 2\sum_{\substack{\balpha\in \N_0^{d+1}: \lvert \balpha \rvert = n \\ \balpha(d+1)\geq 1}} \bw(\balpha)\binom{n}{\balpha} \balpha(i)\balpha(d+1) (1-\bx(i))\bx^{\balpha-\be{d+1}}\right.\\
    &\left.+ \sum_{\substack{\balpha\in \N_0^{d+1}: \lvert \balpha \rvert = n \\ \balpha(d+1)\geq 2}} \bw(\balpha)\binom{n}{\balpha} \balpha(d+1)(\balpha(d+1)-1) (1-\bx(i))\bx^{\balpha+\be{i}-2\be{d+1}} \right. \\
    &-\sum_{\substack{\balpha\in \N_0^{d+1}: \\ \lvert \balpha \rvert = n}} \bw(\balpha)\binom{n}{\balpha}\balpha(i)(n-\balpha(i)-\balpha(d+1)) \bx^{\balpha} \\
    &+2\sum_{\substack{\balpha\in \N_0^{d+1}: \lvert \balpha \rvert = n \\ \balpha(d+1)\geq 1}}\bw(\balpha)\binom{n}{\balpha} (n-\balpha(i)-\balpha(d+1)) \balpha(d+1)\bx^{\balpha+\be{i}-\be{d+1}}\\
    &\left.  - \sum_{\substack{\balpha\in \N_0^{d+1}: \lvert \balpha \rvert = n \\ \balpha(d+1)\geq 2}}\bw(\balpha)\binom{n}{\balpha}\balpha(d+1)(\balpha(d+1)-1) (1-\bx(i)-\bx(d+1))\bx^{\balpha+\be{i}-2\be{d+1}} \right\} \\
    &= \sum_{i=1}^d \left\{ \sum_{\substack{\balpha\in \N_0^{d+1}: \lvert \balpha \rvert = n \\ \balpha(i)\geq 1}} \bw(\balpha)\binom{n}{\balpha} \balpha(i)(\balpha(i)-1) \bx^{\balpha-\be{i}} \right.\\
    &\left.- 2\sum_{\substack{\balpha\in \N_0^{d+1}: \lvert \balpha \rvert = n \\ \balpha(d+1)\geq 1}} \bw(\balpha)\binom{n}{\balpha} \balpha(i)\balpha(d+1) \bx^{\balpha-\be{d+1}}\right.\\
    &\left.+ \sum_{\substack{\balpha\in \N_0^{d+1}: \lvert \balpha \rvert = n \\ \balpha(d+1)\geq 1}} \bw(\balpha)\binom{n}{\balpha} \balpha(d+1)(\balpha(d+1)-1) \bx^{\balpha+\be{i}-\be{d+1}} \right. \\
    &-\sum_{\substack{\balpha\in \N_0^{d+1}: \\ \lvert \balpha \rvert = n}} \bw(\balpha)\binom{n}{\balpha}\balpha(i)(n-1-\balpha(d+1)) \bx^{\balpha} \\
    &\left.+2\sum_{\substack{\balpha\in \N_0^{d+1}: \lvert \balpha \rvert = n \\ \balpha(d+1)\geq 1}}\bw(\balpha)\binom{n}{\balpha} (n-\balpha(d+1)) \balpha(d+1)\bx^{\balpha+\be{i}-\be{d+1}}\right\}\\
    &= \sum_{\substack{\balpha\in \N_0^{d+1}: \lvert \balpha \rvert = n-1}} \sum_{i=1}^d \bw(\balpha+\be{i})\binom{n}{\balpha+\be{i}} (\balpha(i)+1)\balpha(i)\bx^{\balpha} \\
    &- 2\sum_{\substack{\balpha\in \N_0^{d+1}: \lvert \balpha \rvert = n \\ \balpha(d+1)\geq 1}} \bw(\balpha)\binom{n}{\balpha} (n-\balpha(d+1))\balpha(d+1) \bx^{\balpha-\be{d+1}}\\
    &\left.+ \sum_{\substack{\balpha\in \N_0^{d+1}: \lvert \balpha \rvert = n \\ \balpha(d+1)\geq 1}} \bw(\balpha)\binom{n}{\balpha} \balpha(d+1)(\balpha(d+1)-1) (1-\bx(d+1))\bx^{\balpha-\be{d+1}} \right. \\
    &-\sum_{\substack{\balpha\in \N_0^{d+1}: \\ \lvert \balpha \rvert = n}} \bw(\balpha)\binom{n}{\balpha}(n-\balpha(d+1))(n-1-\balpha(d+1)) \bx^{\balpha} \\
    &+2\sum_{\substack{\balpha\in \N_0^{d+1}: \lvert \balpha \rvert = n \\ \balpha(d+1)\geq 1}}\bw(\balpha)\binom{n}{\balpha} (n-\balpha(d+1)) \balpha(d+1)(1-\bx(d+1))\bx^{\balpha-\be{d+1}}\\
    &= n(n-1)\left[ \sum_{\substack{\balpha\in \N_0^{d+1}: \lvert \balpha \rvert = n-1}} \binom{n-1}{\balpha}\bx^{\balpha} \sum_{i=1}^{d+1} \bw(\balpha+\be{i})  \frac{\balpha(i)}{n-1} - \sum_{\substack{\balpha\in \N_0^{d+1}: \lvert \balpha \rvert = n }} \bw(\balpha)\binom{n}{\balpha} \bx^{\balpha} \right] \\
    &=2{\cB}_{\Lambda(0)}H(\bx,\cdot)(\bw).
\end{align*}
}

Altogether, this proves \eqref{eq:generatorrelation}, which completes the proof. 
\end{proof}

\section*{Acknowledgements}

I would like to thank Fernando Cordero for his insights and stimulating discussions throughout the course of this research.

\bibliographystyle{abbrv}
\bibliography{reference.bib}

\end{document}